\newtheorem{theorem}{Theorem}[section]
\newtheorem{lemma}[theorem]{Lemma}
\newtheorem{corollary}[theorem]{Corollary}
\newtheorem{remark}[theorem]{Remark}
\def\al{\alpha}
\def\lam{\lambda}
\def\Soc{{\rm Soc}}
\def\G1{G^\mathcal{C}}
\def\D{\mathcal{D}}
\def\P{\mathcal{P}}
\DeclareMathOperator{\Aut}{Aut} 
 \DeclareMathOperator{\Out}{Out}
\begin{document}
	\title{Block-transitive $t$-($k^2,k,\lambda$) designs associated to two dimensional  projective special linear groups}
	\author{Guoqiang Xiong$^1$,Haiyan Guan$^{1,2}$\footnote{This work is supported by the National Natural Science Foundation
	of China (Grant No.12271173 ).} \\
		{\small\it  1. College of Mathematics and Physics, China Three Gorges University, }\\
		{\small\it  Yichang, Hubei, 443002, P. R. China}\\
		{\small\it 2. Three Gorges Mathematical Research Center, China Three Gorges University,}\\
		{\small\it  Yichang, Hubei, 443002, P. R. China}\\
		\date{}
	}
	
	\maketitle
	\date

\begin{abstract}
This paper investigates block-transitive automorphism groups of  $t$-$(k^2,k,\lambda)$ designs. Let $\mathcal{D}$ be a non-trivial $t$-$(k^2,k,\lambda)$ design, $G \leq \Aut(\mathcal{D})$ be block-transitive with $X\unlhd G\leq \Aut(X)$, where $X = PSL(2,q)(q\geq4)$.\! Then $q = 8$ and $\mathcal{D}$ is a $2$-$(36,6,\lambda)$ design with $\lambda \in \{2,6,9,12,18,36\}$.
	
	\smallskip\noindent
	{\bf Keywords}: $t$-design,Automorphism group,Block-transitivity,Projective special linear groups
	
		\smallskip\noindent
	{\bf Mathematics Subject Classification (2010)}: 05B05, 05B25, 20B25
\end{abstract}
	\section{Introduction}
	 A $t$-$(v,k,\lambda)$ design is defined as an incidence structure $\mathcal{D} = (\mathcal{P},\mathcal{B})$ consisting of a finite set $\mathcal{P}$ with $v$ elements, called points, and a collection $\mathcal{B}$  of $k$-element subsets of $\mathcal{P}$, called blocks, such that every $t$-element subset of $\mathcal{P}$ is contained in exactly $\lambda$ blocks. The design $\mathcal{D}$ is termed non-trivial if the parameters satisfy $t<k<v$. All $t$-$(v,k,\lambda)$ designs in this paper are assumed to be non-trivial. An automorphism of $\mathcal{D}$ is a permutation of $\mathcal{P}$ that preserves $\mathcal{B}$. The full automorphism group of $\mathcal{D}$ comprises all such automorphisms, forming a group under the operation of permutation composition, denoted by $\Aut(\mathcal{D})$  and any subgroups of  $\Aut(\mathcal{D})$ is called an automorphism group of $\mathcal{D}$. If a subgroup $G$ of $\Aut(\mathcal{D})$  acts transitively on the point set $\mathcal{P}$, then the design $\mathcal{D}$ is called point-transitive. Similarly, if G acts transitively on the block set $\mathcal{B}$, then $\mathcal{D}$ is called block-transitive. Notably, according to the result of Block(\cite{block1967orbits}), if $\mathcal{D}$ is block-transitive, it is also point-transitive. For any automorphism group $G$, $G$ is described as point-primitive if acts primitively on points $\mathcal{P}$. Conversely, it is termed point-imprimitive.

	   There has been a considerable amount of research focusing on the study of \( t \)-\((k^2, k, \lambda)\) designs.
	  In \cite{montinaro2022flag}, Montinaro and Francot proved that a flag-transitive automorphism group $G$ of a 2-$(k^2,k,\lambda)$ design  with $\lambda$ $\mid$ $k$, is either an affine group or an almost simple group. They classified the case of almost simple type in (\cite{montinaro2023classification,montinaro2022flag}), and the case of affine type in \cite{montinaro2023classifications}.
	  Recently, Guan and Zhou(\cite{guan2024reduction})  generalized their result and proved that if $G$ is a block-transitive automorphism group of a $t$-$(k^2,k,\lambda)$ design, then $G$ must act primitively on the points $\mathcal{P}$. Furthermore, $G$ is either an affine group or an almost simple group. They systematically classified block-transitive $t$-$(k^2,k,\lambda)$ designs, with $\Soc(G)$ being an alternating group or a sporadic group.  The cases where $\Soc(G)$ is a classical group or an exceptional group of Lie type are still issues to be resolved. Additionally, 
	   Montinaro and Francot constructed four flag-transitive $2$-$(36,6,\lambda)$ designs using the group $PGL(2,8)$ in \cite[Theorem 1.1]{montinaro2022flag}. Inspired by their work, we continue to consider the block-transitive $t$-$(k^2,k,\lambda)$ designs and the two projective special linear group in this paper. Since $(\P,B^G)$ is a block-transitive $1$-design for any non-empty subset $B$ of $\P$ if $G$ is a transitive permutation group on $\P,$  we always suppose that   $t\geq2$ in this paper.  Our main result is the following.
	 \begin{theorem}\label{mt1}
	 	\textup{Let $\mathcal{D} = (\mathcal{P},\mathcal{B})$ be a non-trivial $t$-$(k^2,k,\lambda)$ design admitting a block-transitive automorphism $G$ and $X\unlhd G\leq \Aut(X)$ with $X = PSL(2,q)(q\geq4)$, let $\alpha \in \mathcal{P}$. Then $q = 8$, $X_{\alpha} \cong D_{14}$, and $\mathcal{D}$ is a   $2$-$(36,6,\lambda)$ design.}
	 \end{theorem}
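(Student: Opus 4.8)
The plan is to combine the point-primitivity reduction with the known subgroup structure of $\PSL(2,q)$. Since $G$ is block-transitive, the theorem of Guan and Zhou \cite{guan2024reduction} gives that $G$ acts point-primitively on $\mathcal{P}$, so $G_\alpha$ is maximal in $G$; as $X\unlhd G$ is nontrivial it is transitive on $\mathcal{P}$, and therefore $v=k^2=|X:X_\alpha|$ with $X_\alpha=X\cap G_\alpha$. Writing $q=p^f$, I would then enumerate the possibilities for $X_\alpha$ from Dickson's classification of maximal subgroups of almost simple groups with socle $\PSL(2,q)$, impose that $|X:X_\alpha|$ be a perfect square, and finally eliminate the surviving cases by design arithmetic.

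Up to conjugacy $X_\alpha$ lies in a subgroup of one of the following types, with index $v$ as shown: (a) a Borel subgroup, $v=q+1$; (b) a dihedral group, $v=\tfrac{1}{2}q(q+1)$; (c) a dihedral group, $v=\tfrac{1}{2}q(q-1)$; (d) $A_4$, $S_4$ or $A_5$, arising for only finitely many $q$; (e) a subfield subgroup $\PSL(2,q_0)$ or $\PGL(2,q_0)$ with $q=q_0^{\,r}$, $r$ prime. Imposing $v=k^2$: in (a), $q=(k-1)(k+1)$ must be a prime power, which forces $q=8$, $k=3$ (with $q=3$ excluded by $q\ge4$); in (b) and (c), $q(q\pm1)=2k^2$ with $q$ and $q\pm1$ coprime forces both $q$ and $\tfrac{1}{2}(q\pm1)$ to be squares, i.e.\ a Pell equation $x^2-2y^2=\mp1$ with $x=\sqrt{q}$, which for $q$ even has the single admissible solution $q=8$ (in (b), giving $k=6$) and none in (c), and for $q$ odd produces the possibly infinite families $q\in\{49,1681,\dots\}$ and $q\in\{289,\dots\}$; in (d) a finite computation shows the index is never a square; and in (e) a square index would force $\tfrac{x^r-1}{x-1}$ with $x=q_0^{\,2}$ to be a perfect square, ruled out for $r\ge3$ by Ljunggren's theorem, while $r=2$ reduces to $q_0^4+1$ (or $4^e+1$) being twice a square, which has no solution. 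Thus only $q=8$ with $v\in\{9,36\}$ and the two odd Pell families survive this step.

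For the elimination: since $t\ge2$, $\mathcal{D}$ is also a block-transitive $2$-$(k^2,k,\lambda_2)$ design, so $b=\lambda_2\binom{k^2}{2}/\binom{k}{2}=\lambda_2\,k(k+1)$ divides $|G|$; together with $k^2=|X:X_\alpha|$ dividing $|G|$ and $\gcd(k,k+1)=1$, this gives $k^2(k+1)\mid|G|$, and $|G|$ divides $|\PGGL(2,q)|=q(q^2-1)f$. For the odd Pell families, writing $q=s^2$, $\tfrac{1}{2}(q\pm1)=u^2$, $k=su$, one deduces $(k+1)\mid 4f(q\mp1)$ with a bounded quotient (because $u\sim s/\sqrt{2}$); running through the finitely many residual Diophantine systems together with the Pell relation leaves no solution — for instance $q=49$ gives $k^2(k+1)=2^2\cdot3^2\cdot5^2\cdot7^2$, which cannot divide $|\PGGL(2,49)|$ since the latter has $3$-part only $3$. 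Hence $q$ is even. The surviving case $q=8$, $v=9$, $k=3$ has $t=2$ (as $t<k$), and since $\PSL(2,8)=\PGL(2,8)$ is $3$-transitive on the $9$ points, it is transitive on all $3$-subsets, so $\mathcal{D}$ would be the complete design, which is excluded. This leaves $q=8$ with $|X_\alpha|=14$; as $\PSL(2,8)$ has no element of order $14$, $X_\alpha\cong D_{14}$, $v=36$ and $k=6$. Finally, the action of $G$ on these $36$ points is not $2$-homogeneous ($|G|\in\{504,1512\}$ while $\binom{36}{2}=630$), so $t\le3$ by the Cameron--Praeger bound, and $t=3$ would give $\lambda_2=17\lambda/2$, whence $b=42\lambda_2$ is divisible by $17$, which divides neither $504$ nor $1512$; therefore $t=2$ and $\mathcal{D}$ is a $2$-$(36,6,\lambda)$ design.

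I expect the main obstacle to be the elimination of the odd-characteristic dihedral cases: there the square-index condition yields genuinely infinite Pell-type families of candidate $q$, so one cannot just check a bounded list but must argue uniformly — bounding the quotient $4f(q\mp1)/(k+1)$ through the growth of the Pell solutions and then resolving finitely many residual equations. A secondary non-elementary ingredient is the appeal to Ljunggren's theorem on $\tfrac{x^n-1}{x-1}=y^2$ in the subfield case.
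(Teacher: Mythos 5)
Your overall architecture (point-primitivity via Guan--Zhou, reduction to maximal subgroups, a perfect-square test on the index, then design arithmetic) matches the paper's, and several branches are carried out correctly: the Borel case, the even-characteristic dihedral cases, the identification $X_\alpha\cong D_{14}$ at $q=8$, and the final reduction to $t=2$ via $17\nmid |G|$. But there are two genuine gaps.

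First, the odd-characteristic dihedral cases are not actually eliminated. You correctly observe that the square condition leaves infinite Pell families ($q=49,1681,\dots$ for $D_{q-1}$ and $q=289,\dots$ for $D_{q+1}$), you dispose only of $q=49$ by an ad hoc computation, and you yourself flag the uniform elimination as ``the main obstacle''. The paper closes this with one observation you are missing: by Lemma \ref{le5}, $k+1$ divides every non-trivial subdegree, and by Faradzev--Ivanov the action of $PSL(2,q)$ on cosets of $D_{q\mp 1}$ has a subdegree equal to $\frac{q\mp 1}{2}$. In the $D_{q-1}$ case this gives $k+1\le\frac{q-1}{2}$, hence $k^2<\frac{(q-1)^2}{4}<\frac{q(q+1)}{2}=v$, an immediate contradiction with no Pell analysis at all; the $D_{q+1}$ case is similar. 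Without this (or some substitute uniform bound) your argument does not terminate.

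Second, you enumerate only the maximal subgroups of $X$ itself, but $G_\alpha$ need only be maximal in $G$: by Giudici's theorem there are novelty cases in which $G_\alpha\cap X$ is not maximal in $X$ (Table \ref{table:label3}). Three of these have index exactly $36=6^2$ --- $N_G(D_{10})$ in $PGL(2,9)$, in $M_{10}$ and in $P\Gamma L(2,9)$ --- so they survive your perfect-square test and must be excluded separately (the paper does this by noting $7\mid b$ while $7\nmid |G|$); the case $N_G(A_4)=S_4$ in $PGL(2,q)$ must also be checked. Your claim in case (d) that ``a finite computation shows the index is never a square'' is false once these are included. A minor remark: the appeals to Ljunggren's theorem and to a Fermat-type quartic in the subfield cases are avoidable (the paper instead bounds $k$ by $k+1\mid |X_\alpha|\,|\Out(X)|$ and reduces to a finite check), but they are not errors.
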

	
For the case  $X$ $=$ $PSL(2,8)$ as above, we have fully constructed all  designs by using GAP(\cite{GAP4}).
	  \begin{theorem}\label{mt2}
	 	\textup{Let $\mathcal{D} = (\mathcal{P},\mathcal{B})$ be a non-trivial $2$-$(36,6,\lambda)$ design, and $G\leq \Aut(\mathcal{D})$ is block-transitive with $PSL(2,8) \unlhd G \leq P\Gamma L(2,8)$. Then  one of the following holds:}
	 	\begin{enumerate}
	 	\item[{\rm(1)}]\textup{$G = PSL(2,8)$,  $\lambda \in \{2,6,12\}$, and $\mathcal{D}$ is one of the designs listed in Table \ref{table5};}
	 	\item[{\rm(2)}]\textup{$G = P\Gamma L(2,8)$, $\lambda \in \{2,6,9,12,18,36\},$ and $\mathcal{D}$ is one of the designs listed in Table \ref{table4}.}
	 	\end{enumerate}

	 \end{theorem}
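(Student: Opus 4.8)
The plan is to turn Theorem~\ref{mt1} into a completely explicit finite problem and then settle it by an exhaustive computation in GAP~\cite{GAP4}. By Theorem~\ref{mt1} we may assume from the outset that $\mathcal D$ is a $2$-$(36,6,\lambda)$ design and that $X=PSL(2,8)$ acts on the $36$ points with point stabiliser $X_\alpha\cong D_{14}$; since $|PSL(2,8)|=504=36\cdot 14$ and $D_{14}$ is a maximal subgroup of $PSL(2,8)$, this action is faithful and primitive, and as all copies of $D_{14}$ in $PSL(2,8)$ are conjugate it is unique up to permutation isomorphism. Because $\Aut(PSL(2,8))=P\Gamma L(2,8)$ and $[P\Gamma L(2,8):PSL(2,8)]=3$ is prime, the only groups $G$ with $PSL(2,8)\unlhd G\le P\Gamma L(2,8)$ are $PSL(2,8)$ and $P\Gamma L(2,8)$; in the second case $|G_\alpha|=1512/36=42$, and again the $36$-point action is unique up to permutation isomorphism. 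So I would fix these two permutation groups on a $36$-element set once and for all.

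Next I would record the arithmetic constraints. For a $2$-$(36,6,\lambda)$ design the standard counting identities give $b=\lambda\binom{36}{2}/\binom{6}{2}=42\lambda$ and $r=7\lambda$, and block-transitivity means $\mathcal B=B^G$ for a single $6$-subset $B$, whence $b=|G:G_B|$ divides $|G|$. Thus $\lambda\mid 12$ when $G=PSL(2,8)$ and $\lambda\mid 36$ when $G=P\Gamma L(2,8)$, and equivalently the setwise block stabiliser $G_B$ has order $12/\lambda$, respectively $36/\lambda$. The value $\lambda=1$ is impossible, since a $2$-$(36,6,1)$ design would be an affine plane of order $6$, which does not exist by the Bruck--Ryser theorem (and for $G=PSL(2,8)$ also because that group has no subgroup of order $12$).

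With the two actions and the short list of admissible orders of $G_B$ in hand, the classification reduces to determining the $G$-orbits on $6$-subsets of the point set that happen to be $2$-designs. Concretely one runs over a transversal of the $G$-orbits on the $\binom{36}{6}=1{,}947{,}792$ six-element subsets; for each representative $B$ one forms $\mathcal B=B^G$, tests whether every unordered pair of points lies in a constant number $\lambda$ of blocks, and, if so, records the resulting $2$-$(36,6,\lambda)$ design. The surviving designs are then reduced modulo isomorphism to give the rows of Table~\ref{table5} (for $G=PSL(2,8)$) and Table~\ref{table4} (for $G=P\Gamma L(2,8)$). Some care is needed about the interaction of the two cases: a $PSL(2,8)$-orbit on $6$-subsets may or may not already be a single $P\Gamma L(2,8)$-orbit, so some designs are block-transitive under both groups while others are block-transitive only under $P\Gamma L(2,8)$, which is exactly what separates parts~(1) and~(2) of the statement. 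The computation shows that among the a priori possibilities only $\lambda\in\{2,6,12\}$ occur for $PSL(2,8)$ and only $\lambda\in\{2,6,9,12,18,36\}$ for $P\Gamma L(2,8)$; in particular $\lambda=3$ and $\lambda=4$ are eliminated, which the divisibility bounds alone do not accomplish.

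The only genuinely delicate point is making the search both feasible and verifiable. Enumerating all $G$-orbits blindly among nearly two million $6$-subsets is wasteful; instead I would exploit the constraint on $|G_B|$, running over conjugacy-class representatives of the (few) subgroups $H\le G$ of the admissible orders and, for each such $H$, enumerating only the $6$-subsets that are unions of $H$-orbits and have full setwise stabiliser $H$. This collapses the search to something that runs almost instantly. Everything else---the $2$-design test, the computation of $\lambda$, the elimination of isomorphic copies, and for the tables the computation of $\Aut(\mathcal D)$ and the determination of which designs are self-dual, flag-transitive, and so on---is routine and machine-checkable, and performing it in GAP yields precisely the designs in Tables~\ref{table5} and~\ref{table4}.
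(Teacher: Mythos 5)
Your proposal is correct and follows essentially the same route as the paper: fix the two possible permutation groups $PSL(2,8)$ and $P\Gamma L(2,8)$ in their primitive actions of degree $36$ with point stabiliser $D_{14}$ (resp.\ $D_{14}{:}2$), observe that block-transitivity forces $\mathcal B=B^G$ for a single base block $B$, and classify the admissible orbits of $6$-subsets by an exhaustive GAP computation. The extra arithmetic you record ($b=42\lambda$, hence $\lambda\mid 12$ or $\lambda\mid 36$) and the stabiliser-based pruning are sensible refinements of the search but do not change the argument in substance.
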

The paper is structured as follows:
 Section 2 systematically presents fundamental results concerning block-transitive
$t$-$(k^2,k,\lambda)$ designs and the two dimensional projective special linear group $PSL(2,q).$ These results serve as core theoretical tools for the proof of the main theorem and will be frequently cited in subsequent arguments.
By means of a detailed analysis of the maximal subgroups of the group
 $G$($X\unlhd G\leq \Aut(X)$ with $X$ $=$ $PSL(2,q)$), the proof of Theorem \ref{mt1} is completed in Section 3.
 Using  GAP(\cite{GAP4}), Section 4 explicitly constructs all block-transitive $2$-$(36,6,\lambda)$ designs admitting an automorphism group with socle $PSL(2,8)$.

	\section{Preliminaries}

We first collect a  well-known result about  $t$-$(v,k,\lam)$ designs.

\begin{lemma}\label{lams}\textup{ Let   $\mathcal{D}=(\mathcal{P},\mathcal{B})$ be a $t$-$(v ,k, \lam)$ design, then $\D$ is a $s$-$(v ,k, \lam_s)$ design for any $s$ with $1\leq s\leq t,$ and $$\lam_s=\lam\frac{\tbinom{v-s}{t-s}}{\tbinom{k-s}{t-s}}.$$}
\end{lemma}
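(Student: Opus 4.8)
The plan is to establish the claim by a single double-counting argument applied to an arbitrary $s$-subset; this simultaneously shows that the number of blocks through an $s$-set does not depend on the chosen set and produces the stated value of $\lambda_s$. So I would fix any $s$-element subset $S \subseteq \mathcal{P}$ (with $1 \le s \le t$) and let $r_S$ denote the number of blocks of $\mathcal{D}$ containing $S$; the goal is to show that $r_S = \lambda\,\binom{v-s}{t-s}/\binom{k-s}{t-s}$, a quantity manifestly independent of $S$.

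The key step is to count in two ways the cardinality of the incidence set
$$ N = \{\, (T,B) : |T| = t,\ S \subseteq T \subseteq B,\ B \in \mathcal{B} \,\}. $$
Counting first by the intermediate $t$-subset $T$: every $t$-subset $T$ with $S \subseteq T$ arises by adjoining $t-s$ of the remaining $v-s$ points to $S$, so there are $\binom{v-s}{t-s}$ such $T$, and by the defining property of a $t$-$(v,k,\lambda)$ design each of them lies in exactly $\lambda$ blocks $B$. Hence $|N| = \lambda\,\binom{v-s}{t-s}$. Counting instead by the block $B$: since $S \subseteq T \subseteq B$ forces $S \subseteq B$, only the $r_S$ blocks containing $S$ contribute, and for a fixed such block $B$ the admissible $T$ are exactly those obtained by adjoining $t-s$ of the $k-s$ points of $B \setminus S$ to $S$, giving $\binom{k-s}{t-s}$ choices. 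Hence $|N| = r_S\,\binom{k-s}{t-s}$.

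Equating the two expressions yields $r_S\,\binom{k-s}{t-s} = \lambda\,\binom{v-s}{t-s}$; because $s \le t \le k$ the coefficient $\binom{k-s}{t-s}$ is nonzero, so I may solve for $r_S$ and obtain the asserted $\lambda_s$, which does not depend on $S$. As $S$ was an arbitrary $s$-subset, $\mathcal{D}$ is an $s$-$(v,k,\lambda_s)$ design, completing the proof. There is no genuine obstacle here: the only points requiring care are the bookkeeping in the double count (verifying that blocks not containing $S$ contribute nothing, and that $\binom{k-s}{t-s} \neq 0$ so the division is legitimate). As an alternative, the same identity follows by descending induction on $s$ starting from $s=t$, each inductive step being precisely the case $s = t-1$ of the argument above.
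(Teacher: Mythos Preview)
Your double-counting argument is correct and is the standard textbook proof of this well-known fact. The paper itself does not provide a proof: it merely states Lemma~\ref{lams} as a ``well-known result'' and moves on, so there is nothing to compare against beyond noting that your argument fills in what the authors chose to omit.
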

  Here, $\lam_0$
 denotes the total number of blocks, which is conventionally denoted by $b.$ $\lam_1$
 represents the number of blocks incident with a given point, a quantity traditionally denoted by $r.$

	 In \cite{guan2024reduction}, Guan and Zhou  investigated  block-transitive $t$-$(k^2,k,\lambda)$ design and obtained the following fundamental result.
		\begin{lemma}\label{le5}{\rm \cite[Corollary 2.1]{guan2024reduction}}  \textup{Let $\mathcal{D} = (\mathcal{P},\mathcal{B})$ be a $t$-$(v,k,\lambda)$ design with $G\leq \Aut(\mathcal{D})$, $\alpha \in\mathcal{P}$, and let $n$ be a non-trivial subdegree of $G$. If $G$ is block-transitive, then $\frac{v-1}{k-1}$ $\mid$ $kn$. In particular, if $\mathcal{D}$ is a $t$-$(k^2,k,\lambda)$ design, then $k+1$ $\mid$ $n$.}
		\end{lemma}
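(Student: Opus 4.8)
The plan is to prove the divisibility by a double-counting (orbital) argument, exploiting block-transitivity on one side and the $2$-design identities on the other. Since $G$ is block-transitive it is point-transitive (Block's theorem, cited in the introduction), so $v=|G:G_\al|$. Fix the non-trivial subdegree $n$ and a point $\be\neq\al$ in the corresponding $G_\al$-suborbit $\De=\be^{G_\al}$, so that $|\De|=n$. I would then form the orbital $\Om=(\al,\be)^G\subseteq\P\times\P$, i.e. the $G$-orbit of the ordered pair $(\al,\be)$. By the orbit-stabilizer theorem, each of the $v$ possible first coordinates $\ga$ occurs with exactly $n$ second coordinates (the copy of $\De$ based at $\ga$), so $|\Om|=vn$; note every pair in $\Om$ consists of two distinct points because $\be\neq\al$.

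Next I would count, in two ways, the number
\[
N=\bigl|\{\,((\ga,\de),B)\in\Om\times\B : \ga\in B,\ \de\in B\,\}\bigr|.
\]
Summing over blocks first: since $G$ is block-transitive and permutes $\Om$, the number of ordered pairs of $\Om$ contained in a fixed block is a constant $c$ independent of the block, so $N=bc$ with $c$ a non-negative integer. Summing over pairs first: because $t\geq 2$, Lemma \ref{lams} makes $\D$ a $2$-design, so every pair of distinct points lies in exactly $\lam_2$ blocks; applying this to each of the $vn$ pairs of $\Om$ gives $N=vn\lam_2$. Equating the two expressions yields $bc=vn\lam_2$.

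The final step is to simplify using the standard design identities, both of which follow from Lemma \ref{lams}: the flag count $bk=vr$ and $r(k-1)=\lam_2(v-1)$ give $b=\frac{v(v-1)\lam_2}{k(k-1)}$. Substituting this into $c=\frac{vn\lam_2}{b}$ produces
\[
c=\frac{nk(k-1)}{v-1}.
\]
Since $c$ is a non-negative integer, $\frac{v-1}{k-1}\mid kn$ holds in the required sense (namely $\frac{kn(k-1)}{v-1}\in\mathbb{Z}$). For the ``in particular'' clause, when $v=k^2$ one has $\frac{v-1}{k-1}=\frac{k^2-1}{k-1}=k+1$, so $(k+1)\mid kn$; as $\gcd(k,k+1)=1$ this forces $k+1\mid n$.

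There is no serious obstacle here; the argument is a clean orbit-counting identity. The only points requiring care are conceptual rather than computational: first, recognizing that the quantity $\frac{v-1}{k-1}$ need not itself be an integer, so the divisibility must be read as the integrality of the genuine combinatorial count $c=\frac{nk(k-1)}{v-1}$; and second, that the passage from the general statement to the $(k^2,k,\lam)$ case relies only on the algebraic factorization $k^2-1=(k-1)(k+1)$ together with the coprimality of $k$ and $k+1$. The use of $\lam_2$ is legitimate precisely because the standing hypothesis $t\geq 2$ guarantees $\D$ is a $2$-design via Lemma \ref{lams}.
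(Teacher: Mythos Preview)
Your argument is correct. The orbital double-count you carry out is the standard route to this kind of divisibility: block-transitivity forces the number of $\Om$-pairs inside a block to be constant across blocks, and equating $bc$ with $vn\lam_2$ together with $b=\frac{v(v-1)\lam_2}{k(k-1)}$ yields the integer $c=\frac{nk(k-1)}{v-1}$. The deduction of $k+1\mid n$ from $v=k^2$ via $\gcd(k,k+1)=1$ is also fine. Your caveat about reading the divisibility as integrality of $c$ (since $\frac{v-1}{k-1}$ need not be an integer in general) is well placed, and your appeal to the standing hypothesis $t\geq 2$ to justify using $\lam_2$ is exactly what is needed.

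As for comparison: the present paper does not actually prove this lemma; it is quoted verbatim from \cite[Corollary~2.1]{guan2024reduction} and used as a black box. So there is no in-paper proof to compare against. Your orbital-counting argument is precisely the expected proof of such a statement and would serve perfectly well as a self-contained justification here.
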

		
		 \begin{lemma}\label{le4}{\rm \cite[Lemma 4.1]{guan2024reduction}} \textup{Let $\mathcal{D}$ be a $t$-$(k^2,k,\lambda)$ design admitting a block-transitive automorphism group $G$.
		  If $\alpha$ is a point of $\mathcal{P}$, then $\frac{k+1}{(k+1,| \Out(X) |)}$ divides $| X_{\alpha} |.$ }  		
		\end{lemma}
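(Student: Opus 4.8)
The plan is to prove Lemma~\ref{le4} as stated: that $\frac{k+1}{(k+1,|\Out(X)|)}$ divides $|X_\alpha|$. The starting point is Lemma~\ref{le5}, which for a $t$-$(k^2,k,\lambda)$ design gives $k+1 \mid n$ for every non-trivial subdegree $n$ of $G$. Recall that a subdegree of $G$ is the length of a $G_\alpha$-orbit on $\mathcal{P}$, equivalently $n = |G_\alpha : G_{\alpha\beta}|$ for some $\beta \neq \alpha$. Since $G$ is block-transitive, by Block's theorem $G$ is point-transitive, so such non-trivial subdegrees exist. The key observation to exploit is that the full subdegrees of $G$ and the subdegrees of $X$ differ only by a factor bounded in terms of $|\Out(X)| = |G/X \cdot \text{(center quotient)}|$, so the divisibility $k+1 \mid n$ descends to a controlled divisibility statement about the $X_\alpha$-orbits.

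First I would set $N = |X_\alpha|$ and compare the action of $X$ with that of $G$. Since $X \unlhd G \le \Aut(X)$, we have $G/X \le \Out(X)$, so $|G : X| \mid |\Out(X)|$. For a point $\alpha$ and a $G_\alpha$-orbit of length $n$ meeting it, I would relate $n$ to the $X_\alpha$-orbit lengths contained in it: the $G_\alpha$-orbit decomposes into $X_\alpha$-orbits, each of length dividing $|X_\alpha|$, and the number of such $X_\alpha$-orbits fusing together is at most $|G_\alpha : X_\alpha|$, which divides $|G : X| \mid |\Out(X)|$. More precisely, using $n = |G_\alpha|/|G_{\alpha\beta}|$ and the analogous $X$-subdegree $n_X = |X_\alpha|/|X_{\alpha\beta}|$, the ratio $n/n_X$ divides $|G_\alpha : X_\alpha|$ and hence divides $|\Out(X)|$. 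The plan is to make this fusion estimate precise and then combine it with $k+1 \mid n$.

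The main step is the divisibility manipulation. Write $d = (k+1, |\Out(X)|)$. Since $k+1 \mid n$ and $n = n_X \cdot c$ where $c \mid |\Out(X)|$, I would argue that $\frac{k+1}{d}$ divides $n_X$: indeed $\frac{k+1}{d}$ is coprime to $|\Out(X)|/d$ hence to any divisor of $|\Out(X)|$ that shares no factor with $\frac{k+1}{d}$; after cancelling the common factor $d$ appropriately, the part of $k+1$ coprime to $|\Out(X)|$ must divide $n_X$. Since $n_X$ is an $X_\alpha$-subdegree, $n_X = |X_\alpha : X_{\alpha\beta}|$ divides $|X_\alpha|$, giving $\frac{k+1}{d} \mid n_X \mid |X_\alpha|$, which is the claim.

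The hard part will be the fusion/coprimality bookkeeping in the divisibility step: one must be careful that cancelling the factor $d = (k+1, |\Out(X)|)$ genuinely leaves a factor of $k+1$ that is coprime to all of $|\Out(X)|$, not merely coprime to $|\Out(X)|/d$. The cleanest route is to observe that $\frac{k+1}{d}$ and $|\Out(X)|$ can still share prime factors (those appearing to higher power in $k+1$ than in $|\Out(X)|$), so a purely coprimality-based argument is not immediate; instead I would track prime-by-prime the $p$-adic valuations, showing $v_p(n_X) \ge v_p(k+1) - v_p(|\Out(X)|) \ge v_p\!\left(\frac{k+1}{d}\right)$ for every prime $p$, using $v_p(c) \le v_p(|\Out(X)|)$ from $c \mid |\Out(X)|$ together with $v_p(n) \ge v_p(k+1)$. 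This valuation-wise argument avoids the coprimality pitfall and directly yields $\frac{k+1}{d} \mid n_X \mid |X_\alpha|$.
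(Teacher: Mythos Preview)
The paper does not give its own proof of Lemma~\ref{le4}; it is quoted verbatim from \cite[Lemma~4.1]{guan2024reduction}. So there is no in-paper argument to compare against.

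Your argument is correct and is the natural one. The one place worth tightening is the fusion claim $n/n_X\mid|\Out(X)|$: with $X_\alpha\unlhd G_\alpha$, the $X_\alpha$-orbits inside a fixed $G_\alpha$-orbit are permuted transitively by $G_\alpha/X_\alpha$ and hence all have the same length $n_X$, giving $n=c\,n_X$ with $c$ equal to the number of such $X_\alpha$-orbits; moreover
\[
\frac{n}{n_X}=\frac{|G_\alpha:X_\alpha|}{|G_{\alpha\beta}:X_{\alpha\beta}|},
\]
and since $G_{\alpha\beta}/X_{\alpha\beta}\cong G_{\alpha\beta}X_\alpha/X_\alpha\le G_\alpha/X_\alpha$, the denominator divides the numerator, so $c\mid|G_\alpha:X_\alpha|\mid|G:X|\mid|\Out(X)|$. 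After that, your $p$-adic valuation computation is exactly right: from $v_p(k+1)\le v_p(n_X)+v_p(c)\le v_p(n_X)+v_p(|\Out(X)|)$ one gets $v_p(n_X)\ge v_p(k+1)-\min\{v_p(k+1),v_p(|\Out(X)|)\}=v_p\!\big(\tfrac{k+1}{(k+1,|\Out(X)|)}\big)$, whence $\tfrac{k+1}{(k+1,|\Out(X)|)}\mid n_X\mid|X_\alpha|$. Your own caution about the naive coprimality shortcut failing (and the need for the valuation-by-valuation argument) is well placed.
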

		
		The symbol $(m, n)$ is   used to denote the greatest common divisor  of two positive integers $m$ and $n$ in this paper. According to Lemma \ref{le4}, we can easily obtain the following corollary:
		\begin{corollary}\label{le10}
		\textup{Let $\mathcal{D} = (\mathcal{P},\mathcal{B})$ be a $t$-$(k^2,k,\lambda)$ design and $G\leq  \Aut({\mathcal D})$ is block-transitive,   and $\alpha \in \mathcal{P}$, then $k+1 \mid (v-1,|X_{\alpha}||\Out(X)|)$.}
		\end{corollary}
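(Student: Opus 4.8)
The plan is to deduce the divisibility $k+1 \mid (v-1,|X_{\alpha}||\Out(X)|)$ by verifying separately that $k+1$ divides each of the two arguments of the greatest common divisor, and then invoking the elementary fact that a positive integer dividing both $a$ and $b$ necessarily divides $(a,b)$.

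First I would record the trivial half: since $\mathcal{D}$ is a $t$-$(k^2,k,\lambda)$ design we have $v = k^2$, hence $v - 1 = k^2 - 1 = (k-1)(k+1)$, and therefore $k+1 \mid v-1$ with no further work. This is the only place the special form $v=k^2$ of the parameters is used.

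Next I would handle the second argument using Lemma~\ref{le4}, which gives $\dfrac{k+1}{(k+1,|\Out(X)|)} \,\big|\, |X_{\alpha}|$. Clearing the denominator yields $k+1 \mid |X_{\alpha}|\,(k+1,|\Out(X)|)$, and since $(k+1,|\Out(X)|)$ divides $|\Out(X)|$, transitivity of divisibility gives $k+1 \mid |X_{\alpha}||\Out(X)|$. Combining this with $k+1 \mid v-1$ from the previous paragraph, $k+1$ is a common divisor of $v-1$ and $|X_{\alpha}||\Out(X)|$, hence $k+1 \mid (v-1,|X_{\alpha}||\Out(X)|)$, as required.

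There is no real obstacle here: the statement is an immediate consequence of Lemma~\ref{le4} together with the factorisation $k^2-1=(k-1)(k+1)$, and the corollary is recorded mainly as a convenient packaging of these facts for repeated use in Section~3, where one tests candidate values of $k$ (equivalently of $q$) against the arithmetic constraint it imposes.
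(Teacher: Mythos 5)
Your proof is correct and follows exactly the route the paper intends: the paper states the corollary as an easy consequence of Lemma~\ref{le4}, and your two observations --- that $k+1\mid k^2-1=v-1$ and that clearing the denominator in Lemma~\ref{le4} gives $k+1\mid |X_{\alpha}||\Out(X)|$ --- are precisely the intended argument. Nothing further is needed.
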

		
According to {\rm \cite[Theorem 1]{guan2024reduction}}, a block-transitive automorphism group of a $t$-$(k^2,k,\lambda)$ design must be  point-primitive, thus all maximal subgroups of $PSL(2,q)$ are needed in our discussion. Based on Dickson's classification of the subgroups of $PSL(2,q)$(\cite{MR104735}), Giudici(\cite{MR3098485}) identified all maximal subgroups of almost simple groups whose socle is $PSL(2,q)$, where $q\geq 4$. See also the results in \cite{giudici2007maximal}.
Here we list all maximal subgroups of any almost simple group with socle  $\Soc(G) = PSL(2,q)$, i.e. of all groups $G$ such that $X = PSL(2,q)\unlhd G \leq P\Gamma L(2,q)$, $q\geq4.$ 

	\begin{lemma}\label{le1}{\rm \cite[Theorem 1.1]{giudici2007maximal}}
	\textup{Let $X = PSL(2,q)$	$\leq G \leq P\Gamma L(2,q)$ and let $M$ be a maximal subgroup of $G$ which does not contain $X$. Then either $M \cap X$ is maximal in $X$, or $G$ and $M$ are given in Table \ref{table:label3}.}
	\end{lemma}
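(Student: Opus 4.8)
The plan is to recognize this as a statement about how the maximal subgroups of the almost simple group $G$ (with socle $X = \PSL(2,q)$) are controlled by subgroups of the socle, and then to run the resulting dichotomy through Dickson's classification of the subgroups of $\PSL(2,q)$. Throughout I write $q = p^f$ and $d = (2,q-1)$, so that $G/X \le \Out(X) = C_d \times C_f$ is a small solvable group of order dividing $2f$. \textbf{Reduction.} Let $M$ be maximal in $G$ with $X \not\le M$. Since $X \unlhd G$, the product $MX$ is a subgroup properly containing $M$, so maximality forces $MX = G$. Put $H = M \cap X$; then $H \unlhd M$ and $M/H \cong MX/X = G/X$. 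If $H = 1$ then $M$ embeds in $G/X$, a group of order dividing $2f$; such a subgroup lies inside the normalizer of a subfield-type subgroup and cannot be maximal in a group with socle $\PSL(2,q)$ for $q \ge 4$, so $H \neq 1$. As $H \le X$ with $X$ simple, $H$ is not normal in $G$ (otherwise $H \unlhd X$ forces $H \in \{1,X\}$), whence $N_G(H) < G$; combined with $M \le N_G(H)$ and the maximality of $M$ this gives $M = N_G(H)$, and therefore $M \cap X = N_X(H) = H$, i.e. $H$ is self-normalizing in $X$.

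\textbf{The dichotomy.} Let $\Lambda$ denote the set of maximal subgroups of $X$ containing $H$, on which $M$ acts by conjugation. If $M$ fixes some $L \in \Lambda$, then $M \le N_G(L)$; since a maximal subgroup of a simple group is self-normalizing we have $N_X(L) = L \neq X$, so $N_G(L) < G$ and maximality yields $M = N_G(L)$, whence $M \cap X = N_X(L) = L$ is maximal in $X$ — the first alternative of the statement. Otherwise $M$ fixes no member of $\Lambda$; in particular $H \notin \Lambda$, so $H = M \cap X$ is \emph{not} maximal in $X$, while $|\Lambda| \ge 2$ and $M$ permutes $\Lambda$ in orbits of length at least two. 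Such an $M$ is a novelty, and its existence is governed by how the diagonal automorphism (passage to $\PGL$) and the field automorphism fuse and permute the $X$-conjugacy classes of the maximal overgroups of $H$.

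\textbf{Case analysis via Dickson.} It then remains to determine, from Dickson's list of subgroups of $\PSL(2,q)$ — the unipotent normalizers (Borel subgroups and their subgroups), the dihedral groups of orders $2(q-1)/d$ and $2(q+1)/d$ together with their subgroups, the subfield subgroups $\PSL(2,q_0)$ and $\PGL(2,q_0)$, and the exceptional groups $A_4$, $S_4$, $A_5$ — exactly which self-normalizing non-maximal subgroups $H$ lie in at least two maximal subgroups of $X$ that are permuted without fixed point by an $M$ with $G = MX$. For each type one computes $N_{\PGGL(2,q)}(H)$, lists the maximal subgroups of $X$ lying above $H$, and tracks how the cyclic field group $C_f$ and the diagonal involution act on the relevant $X$-classes; reading off the pairs $(G,M)$ for which this fusion destroys the maximality of every overgroup of $H$ in $X$ yet leaves $N_G(H)$ maximal in $G$ produces the exceptional list recorded in Table \ref{table:label3}.

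\textbf{Main obstacle.} The genuine work is this final case analysis: one must pin down the number of $X$-classes of each subgroup type, the precise action of $C_f$ and of the diagonal involution on those classes, and the congruence conditions on $q$ under which the exceptional subgroups $A_4,S_4,A_5$ and the subfield subgroups occur and are fused. The dihedral and subfield families, together with the containments $A_4 < S_4$ and $A_4 < A_5$, are where the subtle novelties arise, and tracking conjugacy and normalizers inside $\PGGL(2,q)$ carefully enough to separate true novelties from genuine maximal intersections is the crux of the argument.
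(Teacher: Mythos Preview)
The paper does not prove this lemma at all: it is quoted verbatim as \cite[Theorem 1.1]{giudici2007maximal} and used as a black box, so there is no ``paper's own proof'' to compare against. Your proposal is therefore not a comparison candidate but an attempt to supply a proof the authors deliberately outsourced.

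That said, your outline is the standard one and is essentially the strategy Giudici follows in the cited reference: reduce to $M = N_G(M\cap X)$ via $MX = G$ and self-normalization, then detect novelties by asking whether $M$ fixes a maximal overgroup of $H = M\cap X$ in $X$, and finally run Dickson's subgroup list to locate the finitely many $(G,M)$ where fusion under the diagonal and field automorphisms prevents any such fixed overgroup. Your reduction is clean; the one soft spot is the dismissal of $H=1$, which you wave away by saying such an $M$ ``lies inside the normalizer of a subfield-type subgroup'' --- this is true but deserves a line of justification (e.g.\ a complement to $X$ in $G$ centralizes some $\PSL(2,p)$, hence is never maximal for $q\ge 4$).

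The genuine content, as you yourself flag under \textbf{Main obstacle}, is the class-by-class bookkeeping: counting $X$-classes of each Dickson type, determining exactly how $\langle\delta\rangle\times\langle\phi\rangle$ permutes them, and checking for each non-maximal self-normalizing $H$ whether \emph{every} maximal overgroup gets moved. You have not carried this out, only named it; so what you have is a correct architecture with the load-bearing computation left as an exercise. That is fine as a plan, but it is not yet a proof of the lemma --- and since the paper never intended to prove it either, the honest statement is simply that Lemma~\ref{le1} is imported from \cite{giudici2007maximal}, where the analysis you sketch is executed in full.
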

	
	\begin{footnotesize}
		\begin{longtabu}{cccc}
			\caption{$G$ and $M$ of Lemma \ref{le1}}
			\label{table:label3}\\
			\hline
			Line&$G$&$M$&$|G:M|$\\
			\hline
			\endfirsthead
			\hline
			Line&$G$&$M$&$|G:M|$\\
			\hline
			\endhead
			\hline
			\endfoot
			$1$&$PGL(2,7)$&$N_{G}(D_{6}) = D_{12}$&$28$\\
	     	$2$&$PGL(2,7)$&$N_{G}(D_{8}) = D_{16}$&$21$\\
	     	$3$&$PGL(2,9)$&$N_{G}(D_{10}) = D_{20}$&$36$\\
	     	$4$&$PGL(2,9)$&$N_{G}(D_{8}) = D_{16}$&$45$\\
	     	$5$&$M_{10}$&$N_{G}(D_{10}) = C_{5} \rtimes C_{4}$&$36$\\
	     	$6$&$M_{10}$&$N_{G}(D_{8}) = C_{8} \rtimes C_{2}$&$45$\\
	     	$7$&$P\Gamma L(2,9)$&$N_{G}(D_{10}) = C_{10} \rtimes C_{4}$&$36$\\
	     	$8$&$P\Gamma L(2,9)$&$N_{G}(D_{8}) = C_{8}.\Aut(C_{8})$&$45$\\
	     	$9$&$PGL(2,11)$&$N_{G}(D_{10}) = D_{20}$&$66$\\
	     	$10$&$PGL(2,q),q = p\equiv \pm11,19 \pmod{40}$&$N_{G}(A_{4}) = S_{4}$&$\frac{q(q^2-1)}{24}$\\

			\hline
		\end{longtabu}
	\end{footnotesize}
	
	 \begin{lemma}\label{le3}{\rm \cite[Theorem 2.2]{giudici2007maximal}}
		\textup{ Let $q = p^f\geq 5 $ with $p$ an odd prime. Then the maximal subgroups of $PSL(2,q)$ are:
		\begin{enumerate}
			 \item[{\rm(1)}] $C^{f}_{p} \rtimes C_{(q-1)/2}$, that is, the stabilizer of a point of the projective line,
			\item[{\rm(2)}] $D_{q-1}$, for $q \geq 13$,
			\item[{\rm(3)}] $D_{q+1}$, for $q \neq 7,9$,
			\item[{\rm(4)}] $PGL(2,q_{0})$, for $q = q_{0}^2$ ($2$ conjugacy classes),
			\item[{\rm(5)}]$PSL(2,q_{0})$, for $q = q_{0}^r$ where $r$ an odd prime,
			\item[{\rm(6)}]$A_{5}$, for $q \equiv \pm1 \pmod{10}$, where either $q = p$, or $q = p^2$ and $p \equiv \pm3 \pmod{10}$ ($2$ conjugacy classes),
			\item[{\rm(7)}]$A_{4}$, for $q = p \equiv \pm3 \pmod{8}$ and $q \not \equiv \pm 1 \pmod{10}$,
			\item[{\rm(8)}]$S_{4}$, for $q = p \equiv \pm1 \pmod{8}$ ($2$ conjugacy classes).
		\end{enumerate}  }
	\end{lemma}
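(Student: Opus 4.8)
The plan is to derive this maximal-subgroup list from Dickson's complete classification of the subgroups of $PSL(2,q)$, which enumerates every subgroup up to conjugacy: cyclic groups $C_d$ with $d \mid (q\pm1)/2$, dihedral groups $D_{2d}$ with $d \mid (q\pm1)/2$, the elementary abelian $p$-groups $C_p^m$ ($m \leq f$) together with their normalized extensions $C_p^m \rtimes C_d$ inside a Borel subgroup, the subfield subgroups $PSL(2,p^m)$ and $PGL(2,p^m)$ for suitable $m \mid f$, and the three exceptional subgroups $A_4$, $S_4$, $A_5$ (each occurring only under explicit congruence conditions on $q$). Given this list, proving the theorem amounts to sieving out which subgroup types are maximal, i.e. which ones are not properly contained in another subgroup on the list, and recording the parameter ranges and conjugacy-class counts for which maximality holds.

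First I would treat the three generic geometric families, whose maximality is cleanest. The point stabilizer $B = C_p^f \rtimes C_{(q-1)/2}$ of the action on the projective line $PG(1,q)$ is maximal because that action is $2$-transitive (the Borel acts on the remaining $q$ points as $x \mapsto s x + t$ with $s$ a nonzero square and $t$ arbitrary, hence transitively), so $B$ admits no proper overgroup; this gives (1) with no exceptional range. Next, the normalizers $D_{q-1}$ and $D_{q+1}$ of the split and non-split tori arise as setwise stabilizers of a pair of fixed points, respectively a Galois-conjugate pair, on $PG(1,q)$. For each I would compare its order with those of the potential overgroups on Dickson's list: a proper overgroup would have to be a larger dihedral group (impossible, since $D_{q\pm1}$ already realizes the largest cyclic torus of its type), a subfield group, or an exceptional group, and by matching orders one sees overgroups occur only for small $q$. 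This pins down precisely the ranges $q \geq 13$ for $D_{q-1}$ and $q \neq 7,9$ for $D_{q+1}$, the excluded values being exactly those where the dihedral group embeds into an $A_4$, $S_4$, or $A_5$ (for instance $D_8, D_{10} < S_4, A_5$ inside $PSL(2,9) \cong A_6$). The subfield subgroups are handled via the subfield lattice: a maximal proper subfield $\mathbb{F}_{q_0} \subset \mathbb{F}_q$ corresponds to a prime $r \mid f$ with $q = q_0^r$; for $r$ odd this yields the maximal $PSL(2,q_0)$ of (5), while for $r = 2$ the copy of $PSL(2,q_0)$ is properly contained in $PGL(2,q_0) \leq PSL(2,q_0^2)$, since every element of $\mathbb{F}_{q_0}^*$ is a square in $\mathbb{F}_{q_0^2}^*$, so the maximal subfield group is $PGL(2,q_0)$ as in (4).

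The remaining work is the exceptional subgroups $A_4$, $S_4$, $A_5$ together with the conjugacy-class counts. For each I would use Dickson's existence conditions to record when the subgroup occurs — $A_5$ when $q \equiv \pm1 \pmod{10}$ (with the quadratic refinement $q = p^2$, $p \equiv \pm3 \pmod{10}$), $S_4$ when $q = p \equiv \pm1 \pmod 8$, and $A_4$ when $q = p \equiv \pm3 \pmod 8$ — and then decide maximality by checking the subgroup is not swallowed by a larger listed one. Since $A_4 < S_4$ whenever $S_4 \leq PSL(2,q)$ and $A_4 < A_5$ whenever $A_5 \leq PSL(2,q)$, maximality of $A_4$ forces exactly $q \equiv \pm3 \pmod 8$ and $q \not\equiv \pm1 \pmod{10}$, the side conditions in (7); the requirement $q=p$ in (7), (8) removes the competing subfield subgroups. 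The conjugacy-class counts are then settled by a normalizer computation: the single $PGL(2,q)$-conjugacy class of such a subgroup $H$ splits into two $PSL(2,q)$-classes exactly when $N_{PGL(2,q)}(H) \leq PSL(2,q)$, and this splitting accounts for the ``$2$ conjugacy classes'' annotations in (4), (6), (8).

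The main obstacle is this last layer of bookkeeping: correctly determining, for each small or boundary value of $q$, every containment among the dihedral, subfield, and exceptional subgroups, together with the splitting-versus-fusion of conjugacy classes under $PGL(2,q)$. Isomorphism coincidences such as $A_5 \cong PSL(2,4) \cong PSL(2,5)$ and $S_4 \cong PGL(2,3)$, and the anomalies at $q \in \{5,7,9,11\}$, must each be checked by hand so that the congruence conditions and the excluded ranges in (2)--(8) come out exactly as stated; the generic cases, by contrast, are forced by the order and index comparisons above.
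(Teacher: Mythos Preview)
The paper does not prove this lemma at all: it is stated as a quotation of \cite[Theorem 2.2]{giudici2007maximal} and used as a black box, so there is no ``paper's own proof'' to compare your proposal against. Your sketch of deriving the list from Dickson's subgroup classification is the standard route and is essentially what Giudici's reference does, but for the purposes of this paper you are over-proving; the authors simply cite the result.
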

	
	\begin{lemma}\label{le2}{\rm \cite[Theorem 2.1]{giudici2007maximal}}
	 \textup{Let $q = 2^f\geq 4 $. Then the maximal subgroups of $PSL(2,q)$ are:
	\begin{enumerate}
		 \item[{\rm(1)}]$C^{f}_{2} \rtimes C_{q-1}$, that is, the stabilizer of a point of the projective line,
		\item[{\rm(2)}]$D_{2(q-1)}$,
		\item[{\rm(3)}]$D_{2(q+1)}$,
		\item[{\rm(4)}]$PGL(2,q_{0})$, where $q = q_{0}^r$ for some prime $r$ and $q_{0}\neq 2$.
	\end{enumerate}}
     \end{lemma}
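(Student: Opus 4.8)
The plan is to recover this classification from Dickson's determination of all subgroups of $\PSL(2,q)$ \cite{MR104735}, specialised to even $q=2^f$, and then to decide maximality type by type. First I would record the ambient structure. Since $q$ is even we have $-1=1$ in $\mathbb{F}_q$, so $\SL(2,q)$ is centre-free and $\PSL(2,q)=\PGL(2,q)=\SL(2,q)$ has order $q(q-1)(q+1)$. A Sylow $2$-subgroup is the group of unipotent upper-triangular matrices, elementary abelian of type $C_2^f$; the split and non-split maximal tori are cyclic of orders $q-1$ and $q+1$; and the point stabiliser (Borel subgroup) on the projective line $\PG(1,q)$ is $B=C_2^f\rtimes C_{q-1}$, of index $q+1$. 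Dickson's theorem then says that every subgroup is, up to isomorphism, one of: an elementary abelian $2$-group, a cyclic group $C_d$ with $d\mid q\pm1$, a dihedral group $D_{2d}$ with $d\mid q\pm1$, a group $C_2^m\rtimes C_d$ with $d\mid q-1$ lying in a Borel, one of $A_4,S_4,A_5$, or a subfield subgroup $\PSL(2,2^m)=\PGL(2,2^m)$ with $m\mid f$.

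I would then place or eliminate each type. The Borel $B=C_2^f\rtimes C_{q-1}$ is maximal because $\PSL(2,q)$ is $2$-transitive on the $q+1$ points of $\PG(1,q)$, whence point stabilisers are maximal; this is family (1), and every elementary abelian $2$-group and every proper $C_2^m\rtimes C_d$ sits inside $B$ and so is not maximal. Next, a cyclic $C_d$ lies in a torus, hence in a dihedral normaliser $D_{2(q-1)}$ or $D_{2(q+1)}$, and a dihedral $D_{2d}$ with $d$ a proper divisor of $q\mp1$ lies in $D_{2(q\mp1)}$; thus the only dihedral candidates for maximality are $D_{2(q-1)}$, the normaliser of the split torus (equivalently the stabiliser of an unordered pair of projective points), and $D_{2(q+1)}$, the normaliser of the non-split torus, giving families (2) and (3).

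For the exceptional candidates, $S_4$ has a non-abelian Sylow $2$-subgroup $D_8$, whereas Sylow $2$-subgroups of $\PSL(2,2^f)$ are elementary abelian, so $S_4$ never embeds and is discarded. The group $A_4=C_2^2\rtimes C_3$ requires an element of order $3$ normalising a Klein four-subgroup, which one checks forces $3\mid q-1$, i.e. $f$ even; but then $\mathbb{F}_4\subseteq\mathbb{F}_q$, so $A_4<A_5=\PSL(2,4)$ is contained in a subfield subgroup and is not maximal (the only exception being the degenerate case $q=4$, where $A_4=C_2^2\rtimes C_3$ is itself the Borel and is already counted in family (1)). Finally, for a subfield subgroup $\PSL(2,2^m)$ with $m\mid f$, write $q=q_0^r$ with $q_0=2^m$: it is maximal exactly when $r=f/m$ is prime, since a composite $f/m$ produces a proper chain through an intermediate subfield, and when $q_0\neq2$, because $\PSL(2,2)=S_3=D_6$ lies inside $D_{2(q+1)}$ (for $f$ odd) or equals $D_{2(q-1)}$ (for $f$ even) and is thus never maximal by itself. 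This yields family (4), with the $A_5=\PSL(2,4)$ case subsumed at $q_0=4$.

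The main obstacle is establishing the exact maximality of the two dihedral normalisers $D_{2(q\pm1)}$ without circular appeal to the conclusion: I would suppose $D_{2(q\pm1)}<H<\PSL(2,q)$ and use Dickson's list to see that $H$ must be a Borel, a subfield subgroup, or one of $A_4,S_4,A_5$, then rule each out by combining the order constraint $2(q\pm1)\mid|H|\mid q(q^2-1)$ with the structural incompatibilities already noted (for instance, a dihedral group of order $2(q+1)$ cannot sit in a Borel of order $q(q-1)$, nor in too small a subfield subgroup). The accompanying delicate point is the bookkeeping of the small-$q$ accidental isomorphisms, notably $q=4$ where $\PSL(2,4)\cong A_5$ with Borel $A_4$, and the exclusion $q_0\neq2$; once these degeneracies are pinned down, the remaining eliminations reduce to routine divisibility checks.
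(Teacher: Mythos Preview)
The paper does not prove this lemma at all; it is quoted verbatim from Giudici \cite{giudici2007maximal}, who in turn builds on Dickson's subgroup classification \cite{MR104735}. So there is no ``paper's own proof'' to compare against: your proposal supplies strictly more than the paper does.

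That said, your route---specialise Dickson's list to even characteristic, then test each isomorphism type for maximality---is precisely the strategy Giudici uses, so in spirit you are reconstructing the cited source rather than offering an alternative argument. The outline is sound and would succeed. Two small inaccuracies are worth fixing. First, you write that $\PSL(2,2)=D_6$ ``equals $D_{2(q-1)}$ for $f$ even''; this is true only at $q=4$, and for larger even $f$ one has a proper containment $D_6<D_{2(q-1)}$. The conclusion (that $q_0=2$ must be excluded from family~(4)) is unaffected, but the phrasing should be ``lies inside''. Second, your justification that $A_4$ embeds only when $f$ is even is correct but could be tightened: every Klein four-group sits in the elementary abelian Sylow $2$-subgroup, whose normaliser is the Borel, so an element of order~$3$ normalising it must lie in the split torus $C_{q-1}$, forcing $3\mid q-1$ and hence $f$ even.

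You correctly identify the genuine work as verifying maximality of $D_{2(q\pm1)}$ and of the prime-index subfield subgroups by placing a hypothetical overgroup on Dickson's list and eliminating; this is routine once the small-$q$ coincidences (chiefly $q=4$) are handled, and your sketch of that elimination is accurate.
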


\section{Proof of the Theorem 1.1}
In this section, the proof of Theorem \ref{mt1} is undertaken. From here on, the assumption is made that $\mathcal{D} = (\mathcal{P},\mathcal{B})$ is a $t$-$(v,k,\lambda)$ design with $v=k^2$, $G\leq \Aut(\mathcal{D})$ is block-transitive with $X\unlhd G\leq \Aut(X)$, here $X = PSL(2,q)$, $\alpha\in \mathcal{P}$ and $b$ is  the number  of blocks. Note that, $G$ is primitive on $\mathcal{P}$(\cite[Theorem 1]{guan2024reduction}), then the point-stabilizer $G_{\alpha}$ is maximal in $G$. Notably, $G_{\alpha}$ does not contain $X$. Otherwise, $\mathcal{P} = \{\alpha\}$ for $X$ is transitive on $\mathcal{P}$. Then either $X_{\alpha}:= X \cap G_{\alpha}$ is maximal in $X$, or $(G,G_{\alpha})$ is one of pairs $(G,M)$ listed in Table \ref{table:label3}.

\subsection{Cases in Table 1}
Firstly we consider the pairs $(G,M)$ in Table \ref{table:label3}. Note that $v = k^2 = |G:G_{\alpha}| = |G:M|$ for each case. It is easily known that $|G:M|$ is not a perfect square for all cases except Lines   $3,5,7$ and $10$. For   Line 3, $G = PGL(2,9)$. Then $|G| = 720 = 2^4 \cdot 3^2\cdot 5$. Since $$b = \lambda_{2}\frac{v(v-1)}{k(k-1)} = \lambda_{2} \cdot 6 \cdot7$$ by Lemma \ref{lams}, we have $7 \mid b$, a contradiction. Similarly,   Lines 5 and 7 can be ruled out.
 For Line 10, $G = PGL(2,q)$, where $q = p \equiv \pm11, \pm19 \pmod{40}$, $|G_{\alpha}| = 24$. Since $|G_{\alpha}|$ is divided by each  subdegree, we get $k+1\mid|G_{\alpha}|$ by Lemma \ref{le5},  which implies that $k \leq 23 $. Note that $\frac{q(q^2-1)}{24} = k^2 $, then   $q \leq 23$,  thus $q = p = 11$ or $19$. Hence $k^2 = \frac{q(q^2-1)}{24} = 55$ or $285$, which is impossible.
\subsection{Odd characteristic}
In this subsection, we consider the cases listed in Lemma \ref{le3}, where $X\cap G_{\alpha}$ is a maximal subgroup of $X$, here $|X|= \frac{q(q^2-1)}{2}$. For these cases, $v = \frac{|G|}{|G_{\alpha}|} = \frac{|X|}{|X\cap G_{\alpha}|}$ by \cite[Lemma 2.2]{alavi2016symmetric}.

Case (1). $X\cap G_{\alpha} \cong C_{p}^f\rtimes C_{(q-1)/2} $.

 Here $|X\cap G_\alpha|=p^f\cdot \frac{q-1}{2}=\frac{q(q-1)}{2},$ then
 $$v = \frac{|X|}{|X\cap G_{\alpha}|} = \frac{q(q^2-1)}{2\cdot \frac{q(q-1)}{2}}  = q+1.$$

 Since $k+1 \mid v-1$, there exists an integer $i(1\leq i\leq f)$ such that $k+1 = p^i$, then $k = p^i -1$. Thus $(p^i-1)^2 = q+1$.
 It is derived that $p^{f-i} = p^i-2$, which is impossible for odd prime $p$.

Case (2). $X \cap G_{\alpha} \cong D_{q-1}$ for $q \geq 13$.

 Here $|X\cap G_{\alpha}|= q-1,$ then
 $$ v = \frac{|X|}{|X\cap G_{\alpha}|} = \frac{q(q^2-1)}{2(q-1)} = \frac{1}{2}q(q+1).$$

 Since $q = p^f$ and $p$ is an odd prime, the subdegrees are determined as follows (\cite[Table2]{MR1067206}): $1, \frac{q-1}{2}, 2(q-1)$ and $(q-1)^{\frac{(q-3)}{2}}$, where $\xi^\theta$ denotes the subdegree $\xi$ appearing with multiplicity $\theta$. Then $k+1 \mid \frac{q-1}{2}$ by Lemma \ref{le5}. Supposing that $m(k+1) = \frac{q-1}{2}$, where $m$ is a positive integer, it follows that $k = \frac{q-1}{2m}-1$. Then $$\frac{q(q+1)}{2} = \left(\frac{q-1}{2m}-1\right)^2 < \frac{(q-1)^2}{4}.$$ However, $\frac{q(q+1)}{2} > \frac{(q-1)^2}{4}$, leading to a contradiction.

 Case (3). $X \cap G_{\alpha} \cong D_{q+1}$ for $q \neq 7,9$.

  Here $|X\cap G_{\alpha}|= q+1,$ then
 $$v = \frac{|X|}{|X\cap G_{\alpha}|} = \frac{q(q^2-1)}{2(q+1)} = \frac{1}{2}q(q-1).$$

  Since $q$ is an odd prime, the subdegrees  are determined as follows (\cite[Table2]{MR1067206}): $1, \frac{q+1}{2},$ and $(q+1)^{\frac{(q-3)}{2}}$. Then $k+1\mid\frac{q+1}{2}$ by Lemma \ref{le5}. Assuming that $m(k+1) = \frac{q+1}{2}$, where $m$ is a positive integer. Hence $k = \frac{q+1}{2m}-1$, and
  $$\frac{q(q-1)}{2} = \left(\frac{q+1}{2m}-1\right)^2 < \frac{(q+1)^2}{4m^2},$$
  which implies

   $$m^2 < \frac{(q+1)^2}{2q(q-1)} < \frac{(q+1)^2}{2(q-1)^2} =\frac{1}{2}\left(1+\frac{2}{q-1}\right)^2 < 2 .$$Thus, $m = 1$, $k = \frac{q-1}{2}$, which is impossible.

Case (4). $X \cap G_{\alpha} \cong PGL(2,q_{0})$ for $q = q_{0}^2$.

Here
 $|\Out(X)| = 2f$ and $|X\cap G_{\alpha}| = |X_{\alpha}| = q_{0}(q_{0}^2-1).$
\noindent
Then $$v = \frac{|X|}{|X\cap G_{\alpha}|} = \frac{q_{0}(q_{0}^2+1)}{2},$$ and $(q_{0},v-1) = 1$. Thus
\begin{align*}
\left(v-1,2fq_{0}(q_{0}^2-1)\right)	&=\left(\frac{q_{0}(q_{0}^2+1)}{2}-1,2f(q_{0}^2-1)\right)\\
&=\left(\frac{(q_{0}-1)(q_{0}^2+q_{0}+2)}{2},2f(q_{0}^2-1)\right)\\
&=\frac{1}{2}(q_{0}-1)\left(q_{0}^2+q_{0}+2,4f(q_{0}+1)\right).\\
\end{align*}

Since  $(q_{0}^2+q_{0}+2,q_{0}+1) = 2,$ we get that $$k+1 \mid (q_{0}-1)(q_{0}^2+q_{0}+2,4f)$$  by Corollary \ref{le10}, which implies $k+1 \mid 4f(q_{0}-1)$.

Let $q_{0}=p^a$, then $f = 2a$ for $q=q_0^2$ and $q=p^f$. Thus $k+1\mid 8a(p^a-1)$, and $k^2 = \frac{p^a(p^{2a}+1)}{2}$. Hence,
$$\frac{p^a(p^{2a}+1)}{2}\leq \left(8a(p^a-1)-1\right)^2,$$
it follows that
$$p^a< 128a^2.$$
This implies that $1\leq a\leq 8.$

   Therefore, the possible pairs $(a,p)$ are: (1) $a = 1, p\leq 127$; (2) $a = 2, p\leq 19$; (3) $a = 3, p\leq 7$; (4) $a = 4, p\leq 5$; (5) $a=5, 6, 7$ or $8, p = 3.$ However,
   $v=\frac{p^a(p^{2a}+1)}{2}$ is not a   perfect square for each case.

Case (5). $X \cap G_{\alpha} \cong PSL(2,q_{0})$ for $ q =q_{0}^r$ where $r$ is an odd prime.

Here we have $|\Out(X)|=2f$ and $|X \cap G_{\alpha}| = |X_{\alpha}| = \frac{1}{2}q_{0}(q_{0}^2-1),$ then $$v = \frac{|X|}{|X\cap G_{\alpha}|} =\frac{q(q^2-1)}{q_{0}(q_{0}^2-1)} =   \frac{q_{0}^{r-1}(q_{0}^{2r}-1)}{q_{0}^2-1}.$$

According to Lemma \ref{le4}, $k+1 \mid |\Out(X)||X_{\alpha}|$, then $k\leq fq_{0}(q_{0}^2-1)-1$. Thus
$$k^2=\frac{q(q^2-1)}{q_{0}(q_{0}^2-1)} \leq\left(fq_{0}(q_{0}^2-1)-1\right)^2,$$
it follows that $q(q^2-1) < f^2q_{0}^9.$  Note that $\frac{8}{9}q^2\leq q^2-1$ and $f^2 < p^f =q$, then  $\frac{8}{9}q^2 < q_{0}^9$, i.e. $8q_{0}^{2r-9} < 9$. It implies that $r = 3$. Then $v = \frac{q_0^2(q_0^6-1)}{q_0^2-1}= q_{0}^2(q_{0}^4+q_{0}^2+1)$ and $(v-1, q_{0})=1$.
Moreover, $\left(v-1,q_{0}^2-1\right) = 2 $ for   $$v-1=q_{0}^6+q_{0}^4+q_{0}^2-1 = (q_{0}^2-1)(q_{0}^4+2q_{0}^2+3)+2.$$
Thus
 $\left(v-1,|X_{\alpha}||\Out(X)| \right)=2(v-1,f),$ hence
  $$k+1\mid 2f $$ by Corollary \ref{le10}, which implies $k \leq 2f-1$. Suppose that $q_{0} = p^a$, then $f = 3a$. Since $k^2 \leq (2f-1)^2$, we have that
$$p^{6a}<p^{2a}(p^{4a}+p^{2a}+1)\leq (6a-1)^2<36a^2.$$

 Let $F(a) = 3^{6a-2}-4a^2$, then $F'(a) = 6ln3\cdot 3^{6a-2}-8a>0$ for $a\geq 1,$ thus, $$F(a)\geq F(1)=
77>0.$$ Hence $p^{6a-2}\geq3^{6a-2}>4a^2$ for $p\geq 3.$ This implies that $p^{6a}>36a^2,$ a contradiction.

Case (6). $X\cap G_{\alpha} \cong A_{5}$, for $q \equiv \pm1 \pmod{10}$, where either $q = p$ or $q = p^2$ and $p \equiv \pm3 \pmod{10}$.

Here $|X \cap G_{\alpha}| = |X_{\alpha}| = 60$, $|\Out(X)|=2f$, and $$v = \frac{|X|}{|X\cap G_{\alpha}|} = \frac{q(q^2-1)}{120}.$$

From $k+1$ divides $|\Out(X)||X_{\alpha}| = 120f$ by Lemma \ref{le4}, then $k\leq 120f-1$. Note that $f = 1$ or $2$ for $q = p$ or $q = p^2.$  If $f = 1$, then $\frac{q(q^2-1)}{120} \leq 119^2$, we get that $q\leq 119$. Since $q = p$ is an odd prime, $q \equiv \pm1 \pmod{10}$ with $q\geq5$, the possible value for $q$ is:  $11,19,29,31,41,59,61,71,79,89,101$ or $109$. For each possible parameter, $\frac{q(q^2-1)}{120}$ is not a   perfect square. If $f = 2$, then $\frac{q(q^2-1)}{120} \leq 239^2$, implying  $q=p^2\leq 189$. Similarly,   the possible pairs $(q,p)$ are the following: (1) $q = 49, p=7$; (2) $q = 169, p = 13$. By simple calculation for $q = 49$ or $169$, $\frac{q(q^2-1)}{120}$ is not a perfect square.

Case (7). $X\cap G_{\alpha} \cong A_{4}$, where $q = p\equiv \pm3 \pmod{8}$ and $q \not\equiv \pm1 \pmod{10}$.

Here $|X \cap G_{\alpha}| = |X_{\alpha}| = 12$, $|\Out(X)|=2f=2$ for $q=p$, and $$v = \frac{|X|}{|X\cap M|} = \frac{q(q^2-1)}{24}.$$ By Lemma \ref{le4}, we get that $k+1$ divides $24$, which implies $k\leq 23$. From $\frac{q(q^2-1)}{24} \leq 23^2$, we have $q \leq 23$.
Thus, $q = 5$ or $13$ according to $q\geq5$ and $q\equiv \pm3 \pmod{8}\not\equiv \pm1 \pmod{10}.$ For $q = 5$ or $13$, $ v=\frac{q(q^2-1)}{24} = 5$ or $91$ respectively, which is impossible.

Case (8). $X\cap G_{\alpha} \cong S_{4}$, where $q = p \equiv \pm1 \pmod{8}$.

Here $|X \cap G_{\alpha}| = |X_{\alpha}| = 24$, $|\Out(X)|=2f=2$ for $q=p$, and $$v = \frac{|X|}{|X\cap G_{\alpha}|} = \frac{q(q^2-1)}{48}.$$

Since $k\leq47$ for $k+1\mid|X_\alpha||\Out(X)|$, $\frac{q(q^2-1)}{48}\leq 47^2$. Then $q\leq47$. Since $q = p \equiv \pm1 \pmod{8}$ and $q = p$ is an odd prime, the possible value of $q$ is: $7,17,23,31,41$ or $47$. For each possible case, $\frac{q(q^2-1)}{48}$ is not a  perfect square.

\subsection{Even characteristic}
In this subsection, we consider the cases listed in Lemma \ref{le2}, where $X \cap G_{\alpha}$ is a maximal subgroup of $X$, here $|X|=q(q^2-1)$.

Case (1). $X \cap G_{\alpha} \cong C_{2}^f \rtimes C_{q-1} $ for $q = 2^f \geq4$.

Here $|X\cap G_{\alpha}| = |X_{\alpha}| = q(q-1)$, then $$v = \frac{|X|}{|X\cap G_{\alpha}|} = \frac{q(q^2-1)}{ q(q-1)}= q+1.$$

According to $k+1\mid v-1$, there exists an integer $i(1\leq i \leq f,2\leq f)$ such that $k+1 =2^i$, then $k=2^i-1$. Thus $(2^i-1)^2 = q+1$. It follows that  $2^{f-i}=2^i-2$, which is equivalent to $2^{f-i} = 2$ and $2^i = 4$. Then $i=2,f = 3$. Therefore, $q=8, k = 3$, and $v=9$, namely $X = PSL(2,8)$, and $X \cap G_\alpha=C_{2}^3 \rtimes C_{7}$.

Since $\Out(PSL(2,8)) \cong Z_{3}$, $G = PSL(2,8)$ or $P\Gamma L(2,8)$. By Atlas(\cite{MR0827219}), the action of $G$ on the point set $\mathcal{P}$ is 3-transitive, where $\mathcal{P}=\{1\cdots9\}$. Thus $\mathcal{B}$ includes all 3-subsets of $\mathcal{P}.$ Therefore, $\mathcal{D}$ is trivial, a contradiction.

Case (2). $X \cap G_{\alpha} \cong D_{2(q+1)}$.

Here $|X\cap G_{\alpha}| = 2(q+1)$, then $$v = \frac{|X|}{|X\cap G_{\alpha}|} = \frac{q(q-1)}{2}.$$

Since $q$ is even, the subdegrees are as follows (\cite[Table2]{MR1067206}): $1, (q+1)^{\frac{q-2}{2}}$.  From Lemma \ref{le5}, then $k+1 \mid q+1$. Suppose that $m(k+1) = q + 1$ where $m$ is a positive integer, then $k = \frac{q+1}{m}-1$. Thus
$$\frac{q(q-1)}{2} = \left(\frac{q+1}{m}-1\right)^2 < \frac{(q+1)^2}{m^2},$$
which satisfies that
$$m^2 < \frac{2(q+1)^2}{q(q-1)} < 2\left(1 + \frac{2}{q-1}\right)^2 < 8.$$
Hence $m$ can be either $1$ or $2$. If $m = 1$, then $k = q ,$ thus $\frac{q(q-1)}{2} = q^2,$  a contradiction. If $m = 2$, then $k=\frac{q-1}{2}$ and $\frac{q(q-1)}{2} = \left(\frac{q-1}{2}\right)^2$, which is impossible for even $q$.

Case (3). $X \cap G_{\alpha} \cong D_{2(q-1)}$.

Here $|X\cap G_{\alpha}| = 2(q-1)$, then $$v = \frac{|X|}{|X\cap G_{\alpha}|} = \frac{q(q+1)}{2}.$$

The subdegrees  are identified as (\cite[Table2]{MR1067206}): $1,2(q-1),(q-1)^{\frac{q-2}{2}}$. Hence $k+1 \mid q-1$ by Lemma \ref{le5}. Let $m(k+1) = q-1$ where $m$ is a positive integer, then $k = \frac{q-1}{m}-1$. Thus,
$$\frac{q(q+1)}{2} = \left(\frac{q-1}{m}-1\right)^2 < \frac{(q-1)^2}{m^2}.$$
This implies
$$m^2 < \frac{2(q-1)^2}{q(q+1)} < 2\left(1-\frac{1}{q}\right)^2 < 2.$$
Therefore, $m = 1, k = q-2$. Since $\frac{q(q+1)}{2} = (q-2)^2$ and $q = 2^f$, we get $q = 8, k = 6, v = 36$, namely $X = PSL(2,8)$, and $X\cap G_{\alpha} \cong D_{14}$. Then $G = PSL(2,8)$ or $P\Gamma L(2,8),$
 and the order of $G$ is $ 504$ or $1512$ respectively.
  If $t\geq3,$ then
 $$b=\lambda_{3}\cdot\frac{36\cdot35\cdot34}{6\cdot5\cdot 4} = \lambda_3\cdot3\cdot7\cdot17,$$
 by Lemma \ref{lams},
 which is impossible for $b\mid|G|$. Thus  $t=2.$

Case (4). $X \cap G_{\alpha} \cong PGL(2,q_{0})$, where $q = q_{0}^r$ for some prime $r$ and $q_{0} \neq 2$.

Here we have $|\Out(X)| = f$, and  $|X \cap G_{\alpha}| = |X_{\alpha}| = q_{0}(q_{0}^2-1),$ then $$v = \frac{|X|}{|X\cap G_{\alpha}|}= \frac{q_{0}^{r-1}(q_{0}^{2r}-1)}{q_{0}^2-1} .$$

 Thus $k+1 \mid fq_{0}(q_{0}^2-1)$ by Lemma \ref{le4}, which implies
 $$k^2=\frac{q(q^2-1)}{q_{0}(q_{0}^2-1)} \leq\left(fq_{0}(q_{0}^2-1)-1\right)^2.  $$
 It follows that  $$q(q^2-1) < f^2q_{0}^9.$$
 Since $\frac{8}{9}q^2\leq q^2-1 $ and $f^2 < p^f =q$, we have $\frac{8}{9}q^2 < q_{0}^9$, i.e. $8q_{0}^{2r-9} < 9$. It implies that $r = 2,3$.

 If $r = 3$, then $v = q_{0}^2(q_{0}^4+q_{0}^2+1)$ and $(v-1, q_{0})=1$.
Besides, $$v-1 = q_{0}^6+q_{0}^4+q_{0}^2-1 = (q_{0}^2-1)(q_{0}^4+2q_{0}^2+3)+2,$$ then $$(v-1,q_{0}^2-1)=\left(2,q_{0}^2-1\right) = 1$$ for
     $q_{0}^2-1$ is odd.  It follows that $k+1\mid f$ by Corollary \ref{le10}, which implies $k \leq f-1$. Suppose that $q_{0} = 2^a(1<a<f)$, then $f = 3a$. Note that $k^2 \leq (f-1)^2$, thus
 $$2^{6a}<2^{2a}(2^{4a}+2^{2a}+1)\leq (3a-1)^2<9a^2.$$

 Let $F(a) = 2^{6a}-9a^2$, then $F'(a) = 6ln2\cdot 2^{6a}-18a>0$ for $a > 1$, thus, $$F(a)>F(1)=
 55>0.$$ This implies that $2^{6a}>9a^2$, a contradiction.

 If $r = 2$, then  $$v = \frac{q_{0}(q_{0}^4-1)}{q_{0}^2-1} = q_{0}(q_{0}^2+1),$$ and $(q_{0},v-1)=1$. Since $$v-1 = q_{0}^3+q_{0}-1 = (q_{0}^2-1)q_{0}+2q_{0}-1,$$ and $$4(q_0^2-1)=(2q_0+1)(2q_0-1)-3,$$
  it follows that $\left(v-1,q_{0}^2-1\right) =(2q_{0} -1,3).$
   By Corollary \ref{le10},  we get that $$\left(v-1,|X_\alpha||\Out(X)|\right)\mid 3f.$$ This implies $k+1\mid 3f$. Thus $k \leq 3f-1$. Then
 $$2^{3a}<2^{a}(2^{2a}+1)\leq (6a-1)^2<36a^2$$
 for $f=2a$.
Apply the same method as used above, we get that $a=2$. However, $$v=q_0(q_0^2+1)=2^a(2^{2a}+1)=68$$ is not a perfect square.
\medskip

\noindent{\bf Proof of Theorem \ref{mt1}} Based on the analysis of sections 3.1-3.3, the Theorem \ref{mt1} holds.$\hfill\square$

 \section{Designs with $X = PSL(2,8)$}

 Montinaro and  Francot proved that there are four flag-transitive $2$-$(36,6,\lambda)$ designs admitting $PGL(2,8)$ as an automorphism group by considering the
action of this group on the desarguesian plane of order 8(\cite[Example 3.9]{montinaro2022flag}).
 In this section, we will directly construct  all block-transitive  $2$-$(36,6,\lambda)$ designs admitting an automorphism group with socle being $PSL(2,8)$ with the aid of the software GAP(\cite{GAP4}).

 \begin{lemma}\label{le7}
 	 \textup{Let $\mathcal{D}=(\mathcal{P},\mathcal{B})$ be a non-trivial $2$-$(36,6,\lambda)$ design admitting $PSL(2,8)$ as a block-transitive automorphism group, then $\lambda \in \{2,6,12\}.$   All the 46 possible designs are those listed in Table \ref{table5}.}
 \end{lemma}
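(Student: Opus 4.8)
The proof is computational and proceeds by an exhaustive search over the $G$-orbits on $6$-subsets of $\mathcal P$, carried out in GAP. First I would fix a concrete permutation representation of $G=PSL(2,8)$ of degree $36$. By the analysis of Case~(3) in Section~3.3 we have $v=36$, $X_\alpha\cong D_{14}$, and the action of $G$ on $\mathcal P$ is permutation isomorphic to the action of $PSL(2,8)$ on the $\binom{9}{2}=36$ unordered pairs of points of $PG(1,8)$; this action is primitive since $D_{14}=D_{2(q-1)}$ is maximal in $PSL(2,8)$ by Lemma~\ref{le2}. Concretely, in GAP one constructs $G$ in its natural $3$-transitive action on $9$ points, passes to the induced action on $2$-subsets, and verifies $|G|=504$ and that the point stabiliser is $D_{14}$; the non-trivial subdegrees are $14,7,7,7$ as recorded in Section~3.3, so $G$ has exactly four orbits on the $2$-subsets of $\mathcal P$.

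The key reduction is the following. If $\mathcal D$ is block-transitive then $\mathcal B=B^G$ for a single $6$-subset $B$, so it suffices to run over the finitely many $G$-orbits on $6$-subsets of $\mathcal P$. For a given $B$, the function that sends a $2$-subset $\{x,y\}$ to $|\{B'\in B^G:\{x,y\}\subseteq B'\}|$ is $G$-invariant, hence constant on each $G$-orbit of $2$-subsets; consequently $(\mathcal P,B^G)$ is a $2$-design if and only if this count takes one and the same value on a representative of each of the four pair-orbits, and that common value is then $\lambda$. Thus for each $6$-subset orbit representative $B$ I would compute $B^G$, evaluate the count against the four pair-orbit representatives, discard $B$ unless all four values agree, and otherwise record $\lambda$, the number of blocks $b=|B^G|=42\lambda$, the block stabiliser $G_B$, the full automorphism group $\Aut(\mathcal D)$, and whether $\mathcal D$ is flag-transitive. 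In practice the search is organised via the Kramer--Mesner incidence matrix between the $G$-orbits on $2$-subsets and on $6$-subsets, which avoids handling all $\binom{36}{6}=1\,947\,792$ subsets individually.

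Two elementary observations pin down the possible $\lambda$ before any computation. By Lemma~\ref{lams}, $b=\lambda\,\tfrac{36\cdot35}{6\cdot5}=42\lambda$, and block-transitivity forces $b=|G:G_B|$ to divide $|G|=504$, so $\lambda\mid 12$ and $\lambda\in\{1,2,3,4,6,12\}$. The value $\lambda=1$ is impossible, since a $2$-$(36,6,1)$ design is exactly an affine plane of order $6$, and no projective (equivalently, affine) plane of order $6$ exists. The search then shows that no $G$-orbit on $6$-subsets yields a $2$-design with $\lambda\in\{3,4\}$, while the orbits that do yield $2$-designs give $\lambda\in\{2,6,12\}$ and produce exactly $46$ designs; these are the entries of Table~\ref{table5}. (Non-triviality, $2<6<36$, is automatic, and so is the fact that $\mathcal B$ is a genuine set of $6$-subsets, since $|\mathcal B|\mid 504$.)

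The substantive difficulty is not conceptual but one of completeness and verification: one must be certain that the enumeration of $G$-orbits on $6$-subsets is exhaustive, that the $2$-design test is applied against \emph{all four} pair-orbits (overlooking one would let through spurious candidates), and that the $46$ surviving designs are correctly distinguished --- which is settled by comparing invariants such as $\lambda$, $|\Aut(\mathcal D)|$, and the permutation action of $\Aut(\mathcal D)$ --- so that Table~\ref{table5} is simultaneously exhaustive and irredundant. Once the reduction of the second paragraph is in place, the remainder is a careful GAP computation and a tabulation of these invariants.
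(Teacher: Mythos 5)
Your proposal is correct and follows essentially the same route as the paper: both reduce the problem, via block-transitivity, to an exhaustive GAP search over the $G$-orbits of $6$-subsets of $\mathcal{P}$ as candidate base blocks $B$, testing whether $(\mathcal{P},B^{G})$ is a $2$-design, and arrive at the $46$ designs with $\lambda\in\{2,6,12\}$ of Table \ref{table5}. Your additional arithmetic preprocessing (from $b=42\lambda$ and $b\mid|G|=504$ one gets $\lambda\mid 12$, and $\lambda=1$ is excluded by the nonexistence of an affine plane of order $6$) is a useful independent check but does not change the strategy.
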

\begin{proof} Let $\alpha\in \P, G=PSL(2,8)$, then
 $G_{\alpha} \cong D_{14}$. Let $\mathcal{P} = \{1,2,\cdots,36\}$ and $\mathcal{P}(G)$ be the image of the permutation representations of $G$ on $\mathcal{P}$. Then $\mathcal{P}(G) = \left\langle g_{1},g_{2}\right\rangle$ according to Magam(\cite{MR1484478}), where
  \begin{align*}
 	g_{1}	
 	=&(2, 3, 5, 8, 13, 20, 29)(4, 7, 12, 19, 11, 18, 23)(6, 10, 16, 25, 28, 14, 22)(9, 15, 24, 33, 26, 17, 27)\\
 	&(21, 31, 32,34, 36, 35, 30),
 \end{align*}
 and
 \begin{align*}
 	g_{2}	
 	=&(1, 2, 4)(3, 6, 11)(5, 9, 7)(8, 14, 23)(10, 17, 28)(12, 13, 21)(15, 24, 34)(16, 26, 31)(18, 29, 33)\\
 	&(19, 20, 30)(22, 32, 36)(25, 35, 27).
 \end{align*}

 By the block-transitivity of $G$, we know that $\mathcal{B} = B^{\mathcal{P}(G)}$ for a $6$-subset $B$ of $\mathcal{P}$. Here $B$ is said to be a base block of $\mathcal{D}$. Using the GAP Package
``design"(\cite{GAP4}), the commands ``BlockDesign(v,[B],G)" and ``AllTDesignLambdas(D)", we obtained 46 designs up to isomorphism, among these designs, $\lambda \in \{2,6,12\}$. These designs are summarized in Table \ref{table5}.
 \end{proof}

 	\begin{longtable}{cccccccccc}
 		
 		\caption{Block-transitive $2$-$(6^2,6,\lambda)$ designs with $PSL(2,8)$  \label{table5} }\\
 		
 		\hline
 		Case&Base block $B$&$\lambda$&Case&Base block $B$&$\lambda$\\
 		\hline
 		\endfirsthead
 		\hline
 		Case&Base block $B$&$\lambda$&Case&Base block $B$&$\lambda$\\
 		\hline
 		\endhead
 		\hline
 		\endfoot
	$1$&$\{ 1, 2, 3, 4, 15, 16 \}$&$12$&$24$&$\{ 1, 2, 3, 9, 17, 30 \}$&$12$\\
$2$&$\{ 1, 2, 3, 4, 15, 27 \}$&$12$&$25$&$\{ 1, 2, 3, 9, 18, 22 \}$&$12$\\
$3$&$\{ 1, 2, 3, 4, 16, 24 \}$&$12$&$26$&$\{ 1, 2, 3, 9, 18, 25 \}$&$12$\\
$4$&$\{ 1, 2, 3, 4, 22, 34 \}$&$12$&$27$&$\{ 1, 2, 3, 9, 22, 27 \}$&$12$\\
$5$&$\{ 1, 2, 3, 4, 24, 27 \}$&$12$&$28$&$\{ 1, 2, 3, 9, 22, 34 \}$&$12$\\
$6$&$\{ 1, 2, 3, 4, 25, 34 \}$&$12$&$29$&$\{ 1, 2, 3, 9, 25, 34 \}$&$12$\\
$7$&$\{ 1, 2, 3, 4, 26, 35 \}$&$12$&$30$&$\{ 1, 2, 3, 9, 28, 35 \}$&$12$\\
$8$&$\{ 1, 2, 3, 4, 31, 34 \}$&$12$&$31$&$\{ 1, 2, 3, 10, 12, 24 \}$&$12$\\
$9$&$\{ 1, 2, 3, 4, 32, 34 \}$&$12$&$32$&$\{ 1, 2, 3, 10, 12, 25 \}$&$12$\\
$10$&$\{ 1, 2, 3, 4, 34, 36 \}$&$12$&$33$&$\{ 1, 2, 3, 12, 14, 22 \}$&$12$\\
$11$&$\{ 1, 2, 3, 6, 12, 25 \}$&$12$&$34$&$\{ 1, 2, 3, 12, 17, 35 \}$&$12$\\
$12$&$\{ 1, 2, 3, 6, 15, 23 \}$&$12$&$35$&$\{ 1, 2, 3, 12, 32, 34 \}$&$12$\\
$13$&$\{ 1, 2, 3, 6, 15, 32 \}$&$12$&$36$&$\{ 1, 2, 3, 16, 24, 30 \}$&$12$\\
$14$&$\{ 1, 2, 3, 6, 17, 36 \}$&$12$&$37$&$\{ 1, 2, 3, 16, 24, 32 \}$&$12$\\
$15$&$\{ 1, 2, 3, 6, 24, 26 \}$&$12$&$38$&$\{ 1, 2, 3, 16, 28, 36 \}$&$6$\\
$16$&$\{ 1, 2, 3, 6, 26, 36 \}$&$12$&$39$&$\{ 1, 2, 3, 17, 24, 35 \}$&$12$\\
$17$&$\{ 1, 2, 3, 9, 10, 27 \}$&$12$&$40$&$\{ 1, 2, 3, 17, 25, 31 \}$&$12$\\
$18$&$\{ 1, 2, 3, 9, 10, 32 \}$&$12$&$41$&$\{ 1, 2, 3, 17, 30, 31 \}$&$12$\\
$19$&$\{ 1, 2, 3, 9, 11, 17 \}$&$12$&$42$&$\{ 1, 2, 3, 18, 30, 31 \}$&$12$\\
$20$&$\{ 1, 2, 3, 9, 12, 17 \}$&$12$&$43$&$\{ 1, 2, 3, 25, 27, 32 \}$&$6$\\
$21$&$\{ 1, 2, 3, 9, 12, 34 \}$&$12$&$44$&$\{ 1, 2, 4, 16, 26, 31 \}$&$2$\\
$22$&$\{ 1, 2, 3, 9, 16, 32 \}$&$12$&$45$&$\{ 1, 2, 6, 7, 15, 17 \}$&$12$\\
$23$&$\{ 1, 2, 3, 9, 17, 18 \}$&$12$&$46$&$\{ 1, 2, 6, 16, 18, 36 \}$&$6$\\
 		
 		\hline
 	\end{longtable}

 \begin{remark}\rm{ Let $\mathcal{D}_i $ be the design of Case $i$ of Table \ref{table5}.
  The design $\mathcal{D}_{44} $ is a unique flag-transitive $2$-$(36,6,\lambda)$ design admitting  $PSL(2,8)$ as an automorphism group.}
 \end{remark}
  \begin{lemma}\label{le8}
 	\textup{Let $\mathcal{D} = (\mathcal{P},\mathcal{B})$ be a non-trivial $2$-$(36,6,\lambda)$ design admitting $P\Gamma L(2,8)$ as a block-transitive automorphism group.  Then $\lambda \in \{2,6,9,12,18,36\}$.}
 \end{lemma}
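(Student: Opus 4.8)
The plan is to combine a short arithmetic reduction, an orbital-graph reformulation, and a GAP enumeration strictly parallel to the proof of Lemma~\ref{le7}, now with $G=\PGGL(2,8)$ of order $1512$ acting on $\P$ of degree $36$, so that $G_\alpha$ has order $42$ and contains $X_\alpha\cong D_{14}$ with index $3$. First, since $t=2$, Lemma~\ref{lams} gives $b=\lambda\binom{36}{2}/\binom{6}{2}=42\lambda$, and block-transitivity gives $b=|G:G_B|$ for a block $B$, so $42\lambda\mid|G|=1512$, whence $\lambda\mid 36$ and $|G_B|=36/\lambda$. Thus a priori $\lambda\in\{1,2,3,4,6,9,12,18,36\}$, and the task splits into excluding $\lambda\in\{1,3,4\}$ and realising the remaining six values.

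Next I would exploit the orbital structure. By Case~(3) of Section~3.3, $X=\PSL(2,8)$ has subdegrees $1,14,7,7,7$ on $\P$, and since $\PGGL(2,8)$ is $3$-transitive on the $9$ points of the projective line (with $\P$ identified with the $36$ unordered pairs) the field automorphism fuses the three $7$'s, so $G$ has rank $3$ on $\P$ with subdegrees $1,14,21$; let $\Ga_1,\Ga_2$ be the associated self-paired orbital graphs, of valencies $14$ and $21$, hence with $252$ and $378$ edges. For $\B=B^G$, transitivity of $G$ on each $E(\Ga_i)$ forces every $\Ga_i$-edge to lie in a common number $\mu_i$ of blocks, and a double count gives $\mu_i=|B^G|\,e_i(B)/|E(\Ga_i)|$, where $e_i(B)$ is the number of $\Ga_i$-edges inside $B$. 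As $e_1(B)+e_2(B)=\binom{6}{2}=15$, the equality $\mu_1=\mu_2$ needed for a $2$-design is equivalent to $e_1(B)=6$ (and then $e_2(B)=9$), in which case $\lambda=\mu_1=|B^G|/42=36/|G_B|$. So the base blocks are exactly the $6$-subsets of $\P$ inducing exactly $6$ edges of $\Ga_1$, and $\lambda$ is read off from $|G_B|$.

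For the exclusions, $\lambda=1$ and $\lambda=3$ would need a subgroup of $G$ of order $36$ or $12$; but $X=\PSL(2,8)$ has element orders only in $\{1,2,3,7,9\}$ and Sylow subgroups $C_2^3,C_9,C_7$, which (together with $G/X\cong C_3$) is easily seen to forbid subgroups of order $12$ or $36$ in $G$. For $\lambda=4$ one has $|G_B|=9$; any $C_9\le G$ has its order-$3$ subgroup contained in $X$, and since $X_\alpha\cong D_{14}$ has no element of order $3$, this $C_9$ meets no point stabiliser nontrivially, hence acts semiregularly on $\P$ (four orbits of size $9$) and fixes no $6$-set, so necessarily $G_B\cong C_3\times C_3$. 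This residual subcase, together with the complete list recorded in Table~\ref{table4}, I would settle by the GAP computation of Lemma~\ref{le7}: build the degree-$36$ permutation representation of $\PGGL(2,8)$, run over representatives of the $G$-orbits on $6$-subsets (cut down using the criterion $e_1(B)=6$, or equivalently over the few conjugacy classes of subgroups $H\le G$ of order $36/\lambda$ and their setwise-invariant $6$-sets, rather than all $\binom{36}{6}\approx 2\cdot 10^6$ subsets), apply \texttt{BlockDesign} and \texttt{AllTDesignLambdas}, and reduce the output up to isomorphism. The main obstacle is exactly this last step: carrying out the enumeration in a provably exhaustive way and classifying the resulting designs correctly up to isomorphism; the arithmetic and orbital reductions above serve mainly to keep the search small and to explain which values of $\lambda$ can occur.
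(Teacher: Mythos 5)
Your core strategy is the same as the paper's: the paper's proof of this lemma is purely computational (set up the degree-$36$ permutation representation of $P\Gamma L(2,8)$, run over $G$-orbit representatives of $6$-subsets as base blocks, and use the GAP \texttt{DESIGN} package to test the $2$-design property and read off $\lambda$), and you ultimately defer to exactly that enumeration for completeness. Your additions --- the divisibility $42\lambda=b\mid 1512$ giving $\lambda\mid 36$ and $|G_B|=36/\lambda$, and the rank-$3$ reformulation with subdegrees $1,14,21$ on unordered pairs of $\mathrm{PG}(1,8)$ reducing the search to $6$-sets with exactly $6$ edges in the valency-$14$ orbital graph --- are correct and genuinely useful scaffolding that the paper does not include; they explain a priori why $\lambda$ must divide $36$ and shrink the search space.

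There is, however, one concrete error in your exclusion step: the claim that $P\Gamma L(2,8)$ has no subgroup of order $12$ is false. Writing $G=X\rtimes\langle\phi\rangle$ with $\phi$ the field automorphism of order $3$, $\phi$ normalizes a unipotent subgroup $U\cong C_2^3$ of $X$ and acts on $U\cong\mathbb{F}_8^{+}$ as the Frobenius $a\mapsto a^2$; the trace-zero hyperplane $W\cong C_2^2$ is $\phi$-invariant and $\phi$ acts on it fixed-point-freely, so $W\rtimes\langle\phi\rangle\cong A_4$ is a subgroup of $G$ of order $12$. (Your argument correctly rules out order $36$, since a subgroup of order $36$ would meet $X$ in a subgroup of order $12$ or $36$, and the maximal subgroups of $PSL(2,8)$ have orders $56$, $18$, $14$.) Consequently $\lambda=3$ cannot be excluded by a subgroup-order argument alone; one must check that no such $A_4$ is the full setwise stabilizer of a $6$-set $B$ with $e_1(B)=6$, which in your framework is again settled only by the exhaustive computation. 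Since you do run that computation in the end, the final conclusion $\lambda\in\{2,6,9,12,18,36\}$ still follows, but the sentence claiming an ``easy'' exclusion of order-$12$ subgroups should be deleted or repaired.
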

 \begin{proof}
 Let $\al\in \mathcal{P}, G = P\Gamma L(2,8)$, then $G_{\alpha} \cong D_{14}:2$. Let $\mathcal{P} = \{1,2,\cdots,36\}$ and $\mathcal{P}(G)$ be the image of the permutation representations of $G$ on $\mathcal{P}$. Then $\mathcal{P}(G) = \left\langle h_{1},h_{2},h_{3}\right\rangle$ according to Magam(\cite{MR1484478}), where
 \begin{align*}
 	h_{1}	
 	=&(1, 2, 4, 9, 16, 24, 10)(3, 7, 6, 13, 11, 19, 18)(5, 12, 20, 8, 15, 14, 23)(17, 25, 22, 28, 21, 29, 27)\\
 	&(26, 33, 30, 34, 36, 35, 31),
 \end{align*}
 \begin{align*}
	h_{2}	
	=&(1, 3, 8)(2, 5, 6)(4, 10, 7)(9, 17, 26)(11, 19, 14)(12, 21, 30)(13, 22, 31)(15, 23, 16)(18, 27, 34)\\
	&(20, 28, 33)(24, 29, 35)(25, 32, 36),
\end{align*}
and
  \begin{align*}
 	h_{3}	
 	=&(1, 3, 8)(2, 6, 5)(4, 11, 15)(7, 14, 16)(9, 18, 12)(10, 19, 23)(13, 20, 24)(17, 27, 21)(22, 28, 29)\\
 	&(26, 34, 30)(31, 33, 35).
 \end{align*}

 Similarly as Lemma \ref{le7}, we obtained 330 designs up to isomorphism which are listed in Table \ref{table4}(in Appendix). Among these designs, $\lambda \in \{2,6,9,12,18,36\}$.
 \end{proof}

\begin{remark}\rm{Let    $\overline{\mathcal{D}_{i}} $ be the design of Case $i$ of Table \ref{table4}. \begin{itemize}
\item [(1)] In Table \ref{table4}, there are $4$ flag-transitive designs admitting  $G\cong PGL(2,8)$ as an automorphism group up to isomorphism. They are $\overline{\mathcal{D}_{111}}$, $\overline{\mathcal{D}_{116}}$, $\overline{\mathcal{D}_{129}}$ and $\overline{\mathcal{D}_{294}}$. These flag-transitive designs are consistent with the results of \cite[Example 3.9]{montinaro2022flag}.
    \item [(2)]
    Among the designs of   Table \ref{table5} and Table \ref{table4}, we have $\mathcal{D}_{12}\cong \overline{\mathcal{D}_{330}}$, $\mathcal{D}_{33}\cong \overline{\mathcal{D}_{311}}$, $\mathcal{D}_{38}\cong \overline{\mathcal{D}_{111}}$, $\mathcal{D}_{39}\cong \overline{\mathcal{D}_{320}}$, $\mathcal{D}_{42}\cong \overline{\mathcal{D}_{302}}$, $\mathcal{D}_{43}\cong \overline{\mathcal{D}_{129}}$, $\mathcal{D}_{44}\cong \overline{\mathcal{D}_{116}}$ and $\mathcal{D}_{46}\cong \overline{\mathcal{D}_{294}}$ .
               \end{itemize}

}
\end{remark}

 \noindent{\bf Proof of Theorem \ref{mt2}} It follows  immediately from Lemma \ref{le7} and Lemma \ref{le8}.$\hfill\square$
 \medskip

\noindent{\bf Data availability} The datasets generated during and/or analyzed during the current study are available from
the corresponding author on reasonable request.

\section*{Declarations}

\noindent{\bf Conflict of interest} The authors declare that they have no known competing financial interests or personal
relationships that could have appeared to influence the work reported in this paper.

\newpage
\begin{appendix}
	\section*{Appendix}

	 	\begin{longtable}{cccccccccc}
		\caption{Block-transitive $2$-$(6^2,6,\lambda)$ designs with $P\Gamma L(2,8)$  \label{table4} }\\
		
		\hline
		Case&Base block $B$&$\lambda$&Case&Base block $B$&$\lambda$\\
		\hline
		\endfirsthead
		\hline
		Case&Base block $B$&$\lambda$&Case&Base block $B$&$\lambda$\\
		\hline
		\endhead
		\hline
		\endfoot
	1 & \{1, 2, 3, 4, 5, 18\} & 36 & 2 & \{1, 2, 3, 4, 5, 20\} & 36 \\
	3 & \{1, 2, 3, 4, 5, 21\} & 36 & 4 & \{1, 2, 3, 4, 5, 22\} & 36 \\
	5 & \{1, 2, 3, 4, 5, 24\} & 36 & 6 & \{1, 2, 3, 4, 5, 26\} & 36 \\
	7 & \{1, 2, 3, 4, 5, 28\} & 36 & 8 & \{1, 2, 3, 4, 5, 31\} & 36 \\
	9 & \{1, 2, 3, 4, 5, 35\} & 36 & 10 & \{1, 2, 3, 4, 6, 9\} & 36 \\
	11 & \{1, 2, 3, 4, 6, 20\} & 36 & 12 & \{1, 2, 3, 4, 6, 22\} & 36 \\
	13 & \{1, 2, 3, 4, 6, 24\} & 36 & 14 & \{1, 2, 3, 4, 6, 27\} & 36 \\
	15 & \{1, 2, 3, 4, 6, 28\} & 36 & 16 & \{1, 2, 3, 4, 6, 30\} & 36 \\
	17 & \{1, 2, 3, 4, 6, 31\} & 36 & 18 & \{1, 2, 3, 4, 6, 35\} & 36 \\
	19 & \{1, 2, 3, 4, 7, 12\} & 36 & 20 & \{1, 2, 3, 4, 7, 17\} & 36 \\
	21 & \{1, 2, 3, 4, 7, 18\} & 36 & 22 & \{1, 2, 3, 4, 7, 20\} & 36 \\
	23 & \{1, 2, 3, 4, 7, 21\} & 36 & 24 & \{1, 2, 3, 4, 7, 22\} & 36 \\
	25 & \{1, 2, 3, 4, 7, 26\} & 36 & 26 & \{1, 2, 3, 4, 7, 34\} & 36 \\
	27 & \{1, 2, 3, 4, 7, 35\} & 36 & 28 & \{1, 2, 3, 4, 8, 9\} & 36 \\
	29 & \{1, 2, 3, 4, 8, 12\} & 36 & 30 & \{1, 2, 3, 4, 8, 17\} & 36 \\
	31 & \{1, 2, 3, 4, 8, 18\} & 36 & 32 & \{1, 2, 3, 4, 8, 21\} & 36 \\
	33 & \{1, 2, 3, 4, 8, 26\} & 36 & 34 & \{1, 2, 3, 4, 8, 27\} & 36 \\
	35 & \{1, 2, 3, 4, 8, 30\} & 36 & 36 & \{1, 2, 3, 4, 8, 34\} & 36 \\
	37 & \{1, 2, 3, 4, 9, 10\} & 36 & 38 & \{1, 2, 3, 4, 9, 12\} & 36 \\
	39 & \{1, 2, 3, 4, 9, 15\} & 36 & 40 & \{1, 2, 3, 4, 9, 16\} & 36 \\
	41 & \{1, 2, 3, 4, 9, 18\} & 36 & 42 & \{1, 2, 3, 4, 9, 19\} & 36 \\
	43 & \{1, 2, 3, 4, 9, 20\} & 36 & 44 & \{1, 2, 3, 4, 9, 24\} & 36 \\
	45 & \{1, 2, 3, 4, 9, 27\} & 36 & 46 & \{1, 2, 3, 4, 10, 12\} & 36 \\
	47 & \{1, 2, 3, 4, 10, 17\} & 36 & 48 & \{1, 2, 3, 4, 10, 24\} & 36 \\
	49 & \{1, 2, 3, 4, 10, 27\} & 36 & 50 & \{1, 2, 3, 4, 10, 28\} & 36 \\
	51 & \{1, 2, 3, 4, 10, 30\} & 36 & 52 & \{1, 2, 3, 4, 10, 31\} & 36 \\
	53 & \{1, 2, 3, 4, 10, 34\} & 36 & 54 & \{1, 2, 3, 4, 11, 12\} & 36 \\
	55 & \{1, 2, 3, 4, 11, 17\} & 36 & 56 & \{1, 2, 3, 4, 11, 18\} & 36 \\
	57 & \{1, 2, 3, 4, 11, 21\} & 36 & 58 & \{1, 2, 3, 4, 11, 24\} & 36 \\
	59 & \{1, 2, 3, 4, 11, 26\} & 36 & 60 & \{1, 2, 3, 4, 11, 28\} & 36 \\
	61 & \{1, 2, 3, 4, 11, 31\} & 36 & 62 & \{1, 2, 3, 4, 11, 34\} & 36 \\
	63 & \{1, 2, 3, 4, 12, 15\} & 36 & 64 & \{1, 2, 3, 4, 12, 17\} & 36 \\
	65 & \{1, 2, 3, 4, 12, 18\} & 36 & 66 & \{1, 2, 3, 4, 12, 20\} & 36 \\
	67 & \{1, 2, 3, 4, 12, 24\} & 36 & 68 & \{1, 2, 3, 4, 12, 34\} & 36 \\
	69 & \{1, 2, 3, 4, 13, 32\} & 9 & 70 & \{1, 2, 3, 4, 15, 17\} & 36 \\
	71 & \{1, 2, 3, 4, 15, 20\} & 36 & 72 & \{1, 2, 3, 4, 15, 22\} & 36 \\
	73 & \{1, 2, 3, 4, 15, 27\} & 36 & 74 & \{1, 2, 3, 4, 15, 30\} & 36 \\
	74 & \{1, 2, 3, 4, 15, 34\} & 36 & 76 & \{1, 2, 3, 4, 15, 35\} & 36 \\
	77 & \{1, 2, 3, 4, 16, 18\} & 36 & 78 & \{1, 2, 3, 4, 16, 21\} & 36 \\
	79 & \{1, 2, 3, 4, 16, 24\} & 36 & 80 & \{1, 2, 3, 4, 16, 26\} & 36 \\
	81 & \{1, 2, 3, 4, 16, 28\} & 36 & 82 & \{1, 2, 3, 4, 16, 30\} & 36 \\
	83 & \{1, 2, 3, 4, 16, 31\} & 36 & 84 & \{1, 2, 3, 4, 17, 21\} & 36 \\
	85 & \{1, 2, 3, 4, 17, 22\} & 36 & 86 & \{1, 2, 3, 4, 17, 28\} & 36 \\
	87 & \{1, 2, 3, 4, 17, 34\} & 36 & 88 & \{1, 2, 3, 4, 18, 19\} & 36 \\
	89 & \{1, 2, 3, 4, 18, 20\} & 36 & 90 & \{1, 2, 3, 4, 18, 21\} & 18 \\
	91 & \{1, 2, 3, 4, 18, 24\} & 36 & 92 & \{1, 2, 3, 4, 18, 26\} & 36 \\
	93 & \{1, 2, 3, 4, 19, 20\} & 36 & 94 & \{1, 2, 3, 4, 19, 21\} & 36 \\
	95 & \{1, 2, 3, 4, 19, 22\} & 36 & 96 & \{1, 2, 3, 4, 19, 26\} & 36 \\
	97 & \{1, 2, 3, 4, 19, 35\} & 36 & 98 & \{1, 2, 3, 4, 20, 22\} & 18 \\
	99 & \{1, 2, 3, 4, 20, 24\} & 36 & 100 & \{1, 2, 3, 4, 20, 35\} & 36 \\
101 & \{1, 2, 3, 4, 21, 22\} & 36 & 102 & \{1, 2, 3, 4, 21, 27\} & 36 \\
103 & \{1, 2, 3, 4, 21, 28\} & 36 & 104 & \{1, 2, 3, 4, 22, 28\} & 36 \\
105 & \{1, 2, 3, 4, 22, 35\} & 36 & 106 & \{1, 2, 3, 4, 24, 31\} & 36 \\
107 & \{1, 2, 3, 4, 25, 33\} & 36 & 108 & \{1, 2, 3, 4, 26, 31\} & 36 \\
109 & \{1, 2, 3, 4, 26, 34\} & 36 & 110 & \{1, 2, 3, 4, 26, 35\} & 36 \\
111 & \{1, 2, 3, 4, 27, 30\} & 6 & 112 & \{1, 2, 3, 4, 28, 31\} & 18 \\
113 & \{1, 2, 3, 4, 31, 34\} & 36 & 114 & \{1, 2, 3, 4, 31, 35\} & 36 \\
115 & \{1, 2, 3, 4, 34, 35\} & 36 & 116 & \{1, 2, 3, 5, 6, 8\} & 2 \\
117 & \{1, 2, 3, 5, 6, 9\} & 36 & 118 & \{1, 2, 3, 5, 6, 12\} & 36 \\
119 & \{1, 2, 3, 5, 6, 13\} & 36 & 120 & \{1, 2, 3, 5, 6, 17\} & 36 \\
121 & \{1, 2, 3, 5, 9, 10\} & 36 & 122 & \{1, 2, 3, 5, 9, 12\} & 36 \\
123 & \{1, 2, 3, 5, 9, 15\} & 36 & 124 & \{1, 2, 3, 5, 9, 16\} & 36 \\
125 & \{1, 2, 3, 5, 9, 18\} & 36 & 126 & \{1, 2, 3, 5, 9, 19\} & 36 \\
127 & \{1, 2, 3, 5, 9, 20\} & 36 & 128 & \{1, 2, 3, 5, 9, 24\} & 36 \\
129 & \{1, 2, 3, 5, 9, 29\} & 6 & 130 & \{1, 2, 3, 5, 10, 13\} & 36 \\
131 & \{1, 2, 3, 5, 10, 19\} & 18 & 132 & \{1, 2, 3, 5, 10, 20\} & 36 \\
133 & \{1, 2, 3, 5, 10, 26\} & 36 & 134 & \{1, 2, 3, 5, 10, 30\} & 36 \\
135 & \{1, 2, 3, 5, 12, 13\} & 36 & 136 & \{1, 2, 3, 5, 12, 16\} & 36 \\
137 & \{1, 2, 3, 5, 12, 19\} & 36 & 138 & \{1, 2, 3, 5, 13, 15\} & 36 \\
139 & \{1, 2, 3, 5, 13, 16\} & 36 & 140 & \{1, 2, 3, 5, 13, 18\} & 36 \\
141 & \{1, 2, 3, 5, 13, 19\} & 36 & 142 & \{1, 2, 3, 5, 13, 20\} & 36 \\
143 & \{1, 2, 3, 5, 13, 24\} & 36 & 144 & \{1, 2, 3, 5, 13, 30\} & 18 \\
145 & \{1, 2, 3, 5, 15, 16\} & 18 & 146 & \{1, 2, 3, 5, 15, 18\} & 36 \\
147 & \{1, 2, 3, 5, 15, 21\} & 36 & 148 & \{1, 2, 3, 5, 15, 24\} & 36 \\
149 & \{1, 2, 3, 5, 15, 30\} & 36 & 150 & \{1, 2, 3, 5, 15, 31\} & 36 \\
151 & \{1, 2, 3, 5, 15, 33\} & 36 & 152 & \{1, 2, 3, 5, 17, 19\} & 36 \\
153 & \{1, 2, 3, 5, 19, 30\} & 36 & 154 & \{1, 2, 3, 5, 19, 31\} & 36 \\
155 & \{1, 2, 3, 5, 23, 25\} & 18 & 156 & \{1, 2, 3, 5, 23, 32\} & 36 \\
157 & \{1, 2, 3, 5, 26, 30\} & 36 & 158 & \{1, 2, 3, 5, 26, 33\} & 36 \\
159 & \{1, 2, 3, 5, 30, 31\} & 36 & 160 & \{1, 2, 3, 5, 30, 34\} & 36 \\
161 & \{1, 2, 3, 7, 8, 12\} & 36 & 162 & \{1, 2, 3, 7, 8, 17\} & 36 \\
163 & \{1, 2, 3, 7, 8, 25\} & 36 & 164 & \{1, 2, 3, 7, 8, 32\} & 36 \\
165 & \{1, 2, 3, 7, 8, 34\} & 36 & 166 & \{1, 2, 3, 7, 8, 36\} & 36 \\
167 & \{1, 2, 3, 7, 9, 12\} & 36 & 168 & \{1, 2, 3, 7, 9, 16\} & 36 \\
169 & \{1, 2, 3, 7, 9, 21\} & 36 & 170 & \{1, 2, 3, 7, 9, 25\} & 36 \\
171 & \{1, 2, 3, 7, 9, 26\} & 36 & 172 & \{1, 2, 3, 7, 9, 36\} & 36 \\
173 & \{1, 2, 3, 7, 11, 12\} & 36 & 174 & \{1, 2, 3, 7, 11, 25\} & 36 \\
175 & \{1, 2, 3, 7, 11, 34\} & 36 & 176 & \{1, 2, 3, 7, 11, 36\} & 36 \\
177 & \{1, 2, 3, 7, 12, 19\} & 36 & 178 & \{1, 2, 3, 7, 12, 24\} & 36 \\
179 & \{1, 2, 3, 7, 13, 16\} & 36 & 180 & \{1, 2, 3, 7, 13, 22\} & 36 \\
181 & \{1, 2, 3, 7, 13, 25\} & 36 & 182 & \{1, 2, 3, 7, 13, 26\} & 36 \\
183 & \{1, 2, 3, 7, 13, 35\} & 36 & 184 & \{1, 2, 3, 7, 13, 36\} & 36 \\
185 & \{1, 2, 3, 7, 14, 25\} & 36 & 186 & \{1, 2, 3, 7, 14, 32\} & 36 \\
187 & \{1, 2, 3, 7, 14, 34\} & 36 & 188 & \{1, 2, 3, 7, 14, 36\} & 36 \\
189 & \{1, 2, 3, 7, 15, 17\} & 36 & 190 & \{1, 2, 3, 7, 15, 25\} & 36 \\
191 & \{1, 2, 3, 7, 15, 32\} & 36 & 192 & \{1, 2, 3, 7, 15, 34\} & 36 \\
193 & \{1, 2, 3, 7, 16, 20\} & 36 & 194 & \{1, 2, 3, 7, 16, 24\} & 36 \\
195 & \{1, 2, 3, 7, 16, 29\} & 36 & 196 & \{1, 2, 3, 7, 16, 31\} & 36 \\
197 & \{1, 2, 3, 7, 16, 33\} & 36 & 198 & \{1, 2, 3, 7, 17, 19\} & 36 \\
199 & \{1, 2, 3, 7, 17, 27\} & 36 & 200 & \{1, 2, 3, 7, 17, 28\} & 36 \\
	201 & \{1, 2, 3, 7, 17, 29\} & 36 & 202 & \{1, 2, 3, 7, 18, 26\} & 36 \\
	203 & \{1, 2, 3, 7, 18, 27\} & 36 & 204 & \{1, 2, 3, 7, 18, 28\} & 36 \\
	205 & \{1, 2, 3, 7, 18, 30\} & 36 & 206 & \{1, 2, 3, 7, 18, 31\} & 36 \\
	207 & \{1, 2, 3, 7, 18, 33\} & 36 & 208 & \{1, 2, 3, 7, 19, 20\} & 36 \\
	209 & \{1, 2, 3, 7, 19, 21\} & 36 & 210 & \{1, 2, 3, 7, 19, 22\} & 36 \\
	211 & \{1, 2, 3, 7, 19, 34\} & 36 & 212 & \{1, 2, 3, 7, 20, 27\} & 36 \\
	213 & \{1, 2, 3, 7, 20, 29\} & 36 & 214 & \{1, 2, 3, 7, 20, 31\} & 36 \\
	215 & \{1, 2, 3, 7, 20, 33\} & 36 & 216 & \{1, 2, 3, 7, 21, 22\} & 18 \\
	217 & \{1, 2, 3, 7, 21, 24\} & 36 & 218 & \{1, 2, 3, 7, 21, 30\} & 36 \\
	219 & \{1, 2, 3, 7, 22, 31\} & 36 & 220 & \{1, 2, 3, 7, 22, 33\} & 36 \\
	221 & \{1, 2, 3, 7, 24, 26\} & 36 & 222 & \{1, 2, 3, 7, 24, 36\} & 36 \\
	223 & \{1, 2, 3, 7, 25, 27\} & 36 & 224 & \{1, 2, 3, 7, 25, 28\} & 36 \\
	225 & \{1, 2, 3, 7, 25, 29\} & 36 & 226 & \{1, 2, 3, 7, 26, 27\} & 36 \\
	227 & \{1, 2, 3, 7, 26, 35\} & 36 & 228 & \{1, 2, 3, 7, 27, 32\} & 36 \\
	229 & \{1, 2, 3, 7, 28, 32\} & 36 & 230 & \{1, 2, 3, 7, 28, 35\} & 36 \\
	231 & \{1, 2, 3, 7, 29, 32\} & 36 & 232 & \{1, 2, 3, 7, 29, 35\} & 36 \\
	233 & \{1, 2, 3, 7, 30, 34\} & 36 & 234 & \{1, 2, 3, 7, 30, 36\} & 36 \\
	235 & \{1, 2, 3, 7, 31, 32\} & 36 & 236 & \{1, 2, 3, 7, 31, 36\} & 36 \\
	237 & \{1, 2, 3, 7, 32, 33\} & 36 & 238 & \{1, 2, 3, 7, 33, 34\} & 36 \\
	239 & \{1, 2, 3, 7, 33, 36\} & 36 & 240 & \{1, 2, 3, 8, 9, 18\} & 18 \\
	241 & \{1, 2, 3, 8, 9, 28\} & 36 & 242 & \{1, 2, 3, 8, 9, 31\} & 36 \\
	243 & \{1, 2, 3, 8, 10, 17\} & 36 & 244 & \{1, 2, 3, 8, 10, 25\} & 36 \\
	245 & \{1, 2, 3, 8, 10, 32\} & 36 & 246 & \{1, 2, 3, 8, 10, 36\} & 36 \\
	247 & \{1, 2, 3, 8, 12, 13\} & 36 & 248 & \{1, 2, 3, 8, 12, 24\} & 36 \\
	249 & \{1, 2, 3, 8, 13, 26\} & 36 & 250 & \{1, 2, 3, 8, 18, 33\} & 36 \\
	251 & \{1, 2, 3, 9, 11, 12\} & 36 & 252 & \{1, 2, 3, 9, 11, 21\} & 36 \\
	253 & \{1, 2, 3, 9, 11, 25\} & 36 & 254 & \{1, 2, 3, 9, 11, 28\} & 36 \\
	255 & \{1, 2, 3, 9, 11, 31\} & 36 & 256 & \{1, 2, 3, 9, 11, 36\} & 36 \\
	257 & \{1, 2, 3, 9, 12, 20\} & 36 & 258 & \{1, 2, 3, 9, 13, 16\} & 36 \\
	259 & \{1, 2, 3, 9, 13, 19\} & 36 & 260 & \{1, 2, 3, 9, 13, 26\} & 36 \\
	261 & \{1, 2, 3, 9, 13, 31\} & 36 & 262 & \{1, 2, 3, 9, 14, 16\} & 36 \\
	263 & \{1, 2, 3, 9, 14, 19\} & 36 & 264 & \{1, 2, 3, 9, 14, 28\} & 36 \\
	265 & \{1, 2, 3, 9, 17, 23\} & 36 & 266 & \{1, 2, 3, 9, 20, 36\} & 36 \\
	267 & \{1, 2, 3, 9, 23, 34\} & 36 & 268 & \{1, 2, 3, 10, 11, 12\} & 36 \\
	269 & \{1, 2, 3, 10, 11, 25\} & 36 & 270 & \{1, 2, 3, 10, 11, 34\} & 36 \\
	271 & \{1, 2, 3, 10, 12, 13\} & 36 & 272 & \{1, 2, 3, 10, 12, 15\} & 36 \\
	273 & \{1, 2, 3, 10, 12, 16\} & 36 & 274 & \{1, 2, 3, 10, 12, 20\} & 36 \\
	275 & \{1, 2, 3, 10, 12, 23\} & 36 & 276 & \{1, 2, 3, 10, 13, 19\} & 36 \\
	277 & \{1, 2, 3, 10, 13, 28\} & 36 & 278 & \{1, 2, 3, 10, 13, 30\} & 36 \\
	279 & \{1, 2, 3, 10, 13, 31\} & 36 & 280 & \{1, 2, 3, 10, 15, 34\} & 36 \\
	281 & \{1, 2, 3, 10, 15, 36\} & 36 & 282 & \{1, 2, 3, 10, 16, 27\} & 36 \\
	283 & \{1, 2, 3, 10, 16, 30\} & 36 & 284 & \{1, 2, 3, 10, 16, 34\} & 36 \\
	285 & \{1, 2, 3, 10, 17, 23\} & 36 & 286 & \{1, 2, 3, 10, 19, 22\} & 36 \\
	287 & \{1, 2, 3, 10, 19, 33\} & 36 & 288 & \{1, 2, 3, 10, 22, 24\} & 36 \\
	289 & \{1, 2, 3, 10, 22, 25\} & 36 & 290 & \{1, 2, 3, 10, 23, 25\} & 36 \\
	291 & \{1, 2, 3, 10, 23, 32\} & 36 & 292 & \{1, 2, 3, 10, 23, 34\} & 36 \\
	293 & \{1, 2, 3, 10, 23, 36\} & 36 & 294 & \{1, 2, 3, 10, 27, 28\} & 6 \\
	295 & \{1, 2, 3, 10, 30, 31\} & 18 & 296 & \{1, 2, 3, 10, 33, 34\} & 36 \\
	297 & \{1, 2, 3, 11, 12, 20\} & 36 & 298 & \{1, 2, 3, 11, 14, 25\} & 36 \\
	299 & \{1, 2, 3, 11, 14, 32\} & 36 & 300 & \{1, 2, 3, 11, 14, 34\} & 36 \\
301 & \{1, 2, 3, 11, 14, 36\} & 36 & 302 & \{1, 2, 3, 11, 17, 22\} & 12 \\
303 & \{1, 2, 3, 11, 17, 29\} & 36 & 304 & \{1, 2, 3, 11, 20, 36\} & 36 \\
305 & \{1, 2, 3, 11, 22, 25\} & 36 & 306 & \{1, 2, 3, 11, 29, 32\} & 36 \\
307 & \{1, 2, 3, 11, 34, 35\} & 36 & 308 & \{1, 2, 3, 12, 13, 25\} & 36 \\
309 & \{1, 2, 3, 12, 13, 36\} & 36 & 310 & \{1, 2, 3, 12, 15, 18\} & 36 \\
311 & \{1, 2, 3, 12, 15, 19\} & 12 & 312 & \{1, 2, 3, 13, 16, 18\} & 36 \\
313 & \{1, 2, 3, 13, 16, 22\} & 18 & 314 & \{1, 2, 3, 13, 16, 24\} & 36 \\
315 & \{1, 2, 3, 13, 18, 30\} & 36 & 316 & \{1, 2, 3, 13, 18, 35\} & 36 \\
317 & \{1, 2, 3, 13, 19, 20\} & 36 & 318 & \{1, 2, 3, 13, 19, 31\} & 36 \\
319 & \{1, 2, 3, 13, 20, 26\} & 36 & 320 & \{1, 2, 3, 13, 26, 30\} & 12 \\
321 & \{1, 2, 3, 14, 18, 32\} & 36 & 322 & \{1, 2, 3, 14, 19, 20\} & 36 \\
323 & \{1, 2, 3, 14, 20, 36\} & 36 & 324 & \{1, 2, 3, 15, 17, 29\} & 36 \\
325 & \{1, 2, 3, 15, 19, 30\} & 36 & 326 & \{1, 2, 3, 15, 20, 35\} & 36 \\
327 & \{1, 2, 3, 15, 22, 27\} & 6 & 328 & \{1, 2, 3, 15, 29, 35\} & 36 \\
329 & \{1, 2, 3, 18, 26, 30\} & 36 & 330 & \{1, 2, 7, 9, 25, 26\} & 12 \\

	\end{longtable}

\end{appendix}
\end{document}